\title{Minimum-time strong optimality of a singular arc: the
multi-input non involutive case
}
\author{Francesca Chittaro}
\address{Aix Marseille Universit\'e, CNRS, ENSAM, LSIS UMR 7296, 13397 Marseille, France\\ 
and 
Universit\'e de Toulon, CNRS, LSIS UMR 7296, 83957 La Garde, France, {\tt\small francesca-carlotta.chittaro@univ-tln.fr}}
\author{Gianna Stefani}
\address{DIMAI, via S. Marta 3 - 50137 Firenze, Italy, {\tt\small gianna.stefani@unifi.it}.}
\thanks{This work was  supported by  Digiteo grant {\it Congeo}; by the ANR project {\it GCM}, program ``Blanche'',
project number NT09\_504490; by the European Research Council, ERC
StG 2009 ``GeCoMethods", contract number 239748; by PRIN 200894484E\_002 ; and by the research fund CARTT, IUT Toulon--La Garde.}
\date{}
\newtheorem{lemma}{Lemma}
\newtheorem{proposition}{Proposition}
\newtheorem{theorem}{Theorem}
\newtheorem{remark}{Remark}
\newtheorem{definition}{Definition}
\newtheorem{ass}{Assumption}
\numberwithin{theorem}{section}
\numberwithin{lemma}{section} 
 \numberwithin{proposition}{section} 
 \numberwithin{remark}{section} 
 \numberwithin{cor}{section} 
 \numberwithin{equation}{section} 
 \numberwithin{definition}{section} 
\newcommand{\wh}{\widehat}
\newcommand{\wq}{\widehat{q}}
\newcommand{\wxi}{\widehat{\xi}}
\newcommand{\wT}{\widehat{T}}
\newcommand{\wuu}{\widehat{\mathbf{u}}}
\newcommand{\wu}{\widehat{\mathbf{u}}}
\newcommand{\U}{\mathfrak{U}}
\newcommand{\vv}{\mathbf{v}}
\newcommand{\wla}{\widehat{\lambda}}
\newcommand{\well}{\widehat{\ell}}
\newcommand{\R}{\mathbb{R}}
\newcommand{\esse}{\mathcal{S}}
\newcommand{\bt}{\boldsymbol{t}}
\newcommand{\bzero}{\boldsymbol{0}}
\newcommand{\bth}{\boldsymbol{\vartheta}}
\newcommand{\bff}{\boldsymbol{\vec{F}}}
\newcommand{\J}{\boldsymbol{J}^{\prime \prime}}
\newcommand{\tpsi}{\psi}
\newcommand{\ba}{\boldsymbol{a}}
\newcommand{\bb}{\boldsymbol{b}}
\newcommand{\wbnu}{\widetilde{\boldsymbol{\nu}}}
\newcommand{\bnu}{\boldsymbol{\nu}}
\newcommand{\Lie}[1]{\mathrm{Lie}(#1)}
\begin{document}

\maketitle

\begin{abstract}
We consider the minimum-time problem for a
multi-input control-affine system, where we assume that the controlled vector fields generate a non-involutive distribution of constant dimension, and where we do not
assume a-priori bounds for the controls.
We use
Hamiltonian methods to prove that the coercivity of a suitable
second variation associated to a Pontryagin singular arc is sufficient to
prove its strong-local optimality. We provide an application of the result to a generalization of Dubins problem.
\end{abstract}

\section{Introduction}

In this paper we are concerned with the minimum-time problem associated with a control-affine system with several controls: 
\begin{equation} \label{eq: min T}
\min T 
\end{equation}
\noindent
subject to
\begin{equation} \label{eq: contr sys}
\left\{
\begin{array}{l}
\dot{\xi} = \left( f_0 + \sum_{i=1}^m u_i f_i \right) \circ \xi(t) \\
\xi(0) \in N_0, \quad \xi(T) \in N_f\\
\mathbf{u}=(u_1,\ldots,u_m) \in U \subset \mathbb{R}^m
\end{array} \right. .
\end{equation}
\noindent
The state $q$ belongs to a smooth $n$-dimensional manifold $M$, $f_0,f_1,\ldots,f_m$, are smooth vector fields on $M$, 
$N_0$ and $N_f$ are smooth submanifolds of $M$ and the control
functions belong to $L^\infty([0,T],U)$. We remark that for smooth we 
mean $C^{\infty}$.

We are interested in sufficient conditions for the strong-local optimality of \emph{singular} Pontryagin extremals of problem 
\eqref{eq: min T}-\eqref{eq: contr sys}, were \emph{strong} means \emph{with respect to the $C^0$-norm} of the trajectories $\xi(\cdot)$, 
and \emph{singular} means that $\mathbf{u}\in \mathrm{int}\, U$.
More precisely, we assume that there exists a candidate Pontryagin extremal $\wla : [0,\wT] \rightarrow T^*M$ with associated control function $\wu(\cdot) \in 
L^{\infty}([0,\wT],\mathrm{int}\, U)$
that satisfies $\pi\wla(0)\in N_0$ and $\pi\wla(\wT)\in N_f$, and we look for sufficient conditions that guarantee the strong-local optimality of the 
trajectory $\wxi=\pi\wla$, according to the following definition:
\begin{definition}
  The trajectory $\widehat\xi$ is a strong--local minimizer of the above considered problem,
  if there exist a neighborhood $V$ of its graph in $\mathbb{R} \times M$ and $\epsilon >0$ such that
  $\widehat\xi$ is a minimizer among the admissible trajectories whose
  graph is contained in $V$ and whose final time is greater than $\widehat T -
  \epsilon$, independently on the values of the associated controls.
\end{definition}
\noindent
This notion has been called
{\em time-state--local optimality} in \cite{PoggStebsb}, where also a stronger version of optimality
is considered.

The only assumption we do on the control set $U$ is that it has non-empty interior; although by Filippov's Theorem (\cite{AgSac}) 
we know that the existence of the minimum is
guaranteed when $U$ is compact and convex, here the existence of a candidate minimizer is taken as assumption.

A classical approach to sufficient optimality conditions is to consider the second variation 
(see for instance \cite{ASZ98,AgSac,bonz,Dmitruk77,GabKir72,SteZez97} and references therein). 
In particular, in \cite{ASZ98} and \cite{AgSac} the authors propose the definitions of an \emph{intrinsically defined} second variation, which is invariant
for coordinate changes, and therefore suitable to study optimal control problems defined on smooth manifolds.
  
A peculiarity of control-affine minimum-time problems is that the second variation does not contain the Legendre term (that is, the term which is quadratic 
in the variations of the control), thus turning out to be singular. A tool widely used to overcome this problem is the so-called 
Goh transformation \cite{Goh66}.
Thanks to this transformation,  performed in a coordinate--free way, we are able to convert the second variation proposed in \cite{ASZ98} into another functional 
which is no more singular, and thus it can be asked to be coercive
with respect to the $L^2$-norm of the new control variable. This approach, both
for classical and intrinsically-defined second variations, has been 
widely used in the analysis of sufficient optimality conditions (see for instance \cite{bonz,Dmitruk08,PoggStebsb,SteRoma}, and references therein).
                                                                                                                                                                  
In optimal-control problems, a classical method to prove the optimality of a Pontryagin extremal is to cover a neighborhood of the reference trajectory with
other admissible trajectories, to lift them to the cotangent bundle, and compare the costs evaluated along each trajectory.
In the standard theory, the trajectories to be lifted are obtained by projecting suitable solutions of the Hamiltonian system associated with	 the
maximized Hamiltonian $F_{max}$, see for example \cite{ASZ98,AgSac}.
This \emph{Hamiltonian method} is particularly effective, since it allows us to compare trajectories that belong to a $C^0$-neighborhood of the reference 
trajectory, independently on the value of the control.

When the extremal is singular $F_{max}$ cannot be used (see \cite{PoggStebsb}), then to construct the lifted trajectories we consider the solutions
of a system governed by a Hamiltonian greater than or equal to $F_{max}$,
as suggested by the approach used in \cite{PoggStebsb,SteRoma}.

Ultimately, the paradigm to get sufficient optimality condition for singular extremals combines an approach based on the coercivity of the second variation with the 
Hamiltonian approach. 
It relies on the following facts.
\begin{itemize}
 \item Under some regularity conditions, it is possible to define a smooth super-Hamiltonian whose flow is tangent to all singular extremals.
\item The derivative of the  super-Hamiltonian flow
is, up to an isomorphism, the Hamiltonian flow associated to the linear-quadratic problem
given by the second variation.
 \item If the second variation is coercive, it is possible  to transform the linear-quadratic problem associated with the original one into 
 a problem with free initial point, whose 
second variation is still coercive. In particular, this implies that the space of initial constraints for the linear-quadratic problem remains horizontal (that is,
it projects bijectively on $M$) under the action of the associated Hamiltonian flow.
\item The previous points imply  that the projection on $M$ of the super-Hamiltonian
flow emanating from the Lagrangian manifold associated with the initial conditions of the new
problem is locally invertible. As a result we get that it is possible to lift the trajectories
to the cotangent bundle, in order to apply the Hamiltonian method.
\end{itemize}

In the single-input case, problem \eqref{eq: min T}-\eqref{eq: contr sys} has already been studied in \cite{PoggStebsb}, where it has been shown that the 
coercivity of the second variation is a sufficient condition for the strong-local optimality of singular Pontryagin extremals.
In \cite{ChiSte} the authors studied the multi-input problem under the assumptions that the controlled vector fields
generate an involutive distribution. 
In this paper we relax this condition, that is we allow the controlled vector field to generate a non-involutive distribution.

We remark that our result remains true even if $U=\mathbb{R}^m$, then we need stronger assumptions than the usual ones. 
In particular, we have to consider High Order Goh condition (Assumption \ref{ass GGoh}), which we prove to be indeed a necessary optimality condition when 
$U=\mathbb{R}^m$. 
This phenomenon is not pointed out when the Lie algebra generated by the controlled vector 
fields is involutive, in particular when the system is single-input.
Indeed, in these cases High Order Goh condition is automatically satisfied under Goh condition.

We believe that this result, applied to the case where $U$ is an unbounded set, could be of help in the study of the infimum-time problem where the ``optimal'' 
trajectories may contain jumps, as in \cite{bressan-rampazzo-comm,bressan-rampazzo-noncomm,sar-guerra}, where integral costs are considered.

The structure of the paper is the following: we state the regularity assumptions in Section \ref{not e ass}; in Section \ref{sec secvar} we define the second variation and 
investigate the implications of its coercivity; the Hamiltonian method is exposed in Sections \ref{sec geometry} and \ref{sec results}, where we state and prove 
the main result; in Section \ref{sec example} we provide an example, based on a high dimensional version of Dubins' problem.
In the Appendices there are technical details on some results stated in the paper.  

\section{Notations and regularity assumptions} \label{not e ass}

In this section we clarify the notation we will use throughout the paper, and we state the regularity assumptions on the system.

Let $f$ be a vector field on the manifold $M$ and $\varphi : M \to \mathbb{R}$ be a smooth function. The action of $f$ on $\varphi$ (directional derivative or 
Lie derivative) evaluated on a point $q$ is denoted with the two expressions
\[
L_f \varphi(q)= \langle d\varphi(q),f(q)\rangle.
\]

The Lie brackets of two vector fields $f,g$ are denoted as commonly with $[f,g]$.
When dealing with vector fields labeled by indexes, we will use the following notations to denote their Lie brackets:
\[
f_{ij}(q)= [f_i,f_j](q),\qquad f_{ijk}(q)= [f_i,[f_j,f_k]](q).
\]

We call $\mathfrak{f}$ the set of the controlled vector fields of the control system \eqref{eq: contr sys}, that is $\mathfrak{f}=\{f_1,\ldots,f_m\}$, 
and Lie($\mathfrak{f}$) the Lie algebra generated by the set $\mathfrak{f}$. 
We denote Lie$_q(\mathfrak{f})=\mathrm{span}\{f(q) : f\in \mathrm{Lie}(\mathfrak{f})\}$.
In the following, for every $q\in M$, we call $\mathcal{I}_{q}$ the integral manifold of the distribution
$\Lie{\mathfrak{f}}$ passing through $q$.
The first assumption of this paper concerns the regularity of Lie($\mathfrak{f}$).

\begin{ass}   \label{ass lie}
The controlled vector field $f_1,\ldots,f_m$ are linearly independent 
and the Lie algebra $\Lie{\mathfrak{f}}$  has constant dimension $R$.
\end{ass}

Let us consider the cotangent bundle $T^*M$ of $M$, and let $\pi$ denote the canonical projection on $M$. It is well known that $T^*M$ possesses a canonically defined 
symplectic structure, given by the symplectic form 
$\sigma_{\ell}=d\varsigma(\ell)$, where $\ell$ denotes an element of $T^*M$ and $\varsigma$ is the Liouville canonic 1-form $\varsigma(\ell)=\ell\circ \pi_*$. 

We denote with the corresponding capital letter the Hamiltonian function associated with every vector field on $M$, that is 
$F(\ell)=\langle \ell,f(\pi \ell) \rangle$.

\begin{remark}
Let us recall that the following relation between the Lie brackets of two vector fields $f,g$ and the Poisson brackets of their associated Hamiltonian functions holds:
\[
\langle \ell, [f,g](\pi \ell) \rangle=\{F,G\}(\ell).
\] 
\end{remark}
\noindent
As above, we denote 
\[
F_{ij}(\ell)=\{F_i,F_j\}(\ell) \qquad  F_{ijk}(\ell)=\{F_i,\{F_j,F_k\}\}(\ell).
\]

We recall that the symplectic structure allows us to associate, with each Hamiltonian function $F$, the Hamiltonian vector field $\vec{F}$ on $T^*M$
defined by the action
\[
\langle d F(\ell), \cdot \rangle = \sigma_{\ell}(\cdot,\vec{F}(\ell)).
\]

In the following, we consider some special Hamiltonian functions associated with the optimal control problem \eqref{eq: min T}-\eqref{eq: contr sys}: 
 the (time-dependent) \emph{reference Hamiltonian}
\begin{equation} \label{ref Ham}
\widehat{F}_t(\ell)=F_0(\ell) + \sum_{i=1}^m \widehat{u}_i(t) F_i(\ell),
\end{equation}
where $\widehat{\mathbf{u}}(\cdot)$ is the \emph{reference control},
\noindent
and the maximized Hamiltonian
\[
F_{max} (\ell)=\sup_{\mathbf{u}\in U} \Big(F_0(\ell) + \sum_{i=1}^m u_i F_i(\ell)\Big).
\]

The Hamiltonian flow from time 0 to time $t$ associated with the reference Hamiltonian, that is the solution of the equation 
$\dot{\ell}=\vec{\widehat{F}}_t(\ell)$,
is denoted with $\widehat{\mathcal{F}}_t$. 

\bigskip

Let us consider an \emph{admissible triple} $(\widehat{\xi},\widehat{\mathbf{u}},\widehat{T})$ for the problem \eqref{eq: contr sys}, 
that is a solution of the control system; 
let us assume that $\widehat{\mathbf{u}} \in \mathrm{int} \, U$, and let us set $\wq_0=\wxi(0)$ and $\widehat{q}_f=\wxi(\wT)$.
We study the strong-local optimality of the triple $(\widehat{\xi},\widehat{\mathbf{u}},\widehat{T})$, that in the following we call \emph{reference triple},
among all solutions of \eqref{eq: contr sys} with
$N_0\subset \mathcal{I}_{\wq_0}$ and $N_f\subset \mathcal{I}_{\wq_f}$. 
In particular, Assumption \ref{ass lie} can be asked to hold only in a neighborhood of the reference trajectory. 

A classical necessary condition for the optimality of the reference triple $(\widehat{\xi},\widehat{\mathbf{u}},\widehat{T})$ is the
Pontryagin Maximum Principle (PMP), that we recall here stated in its Hamiltonian form (see \cite{AgSac}).
PMP states that if a reference trajectory $(\widehat{\xi},\widehat{\mathbf{u}},\widehat{T})$ satisfying $\widehat{\mathbf{u}} \in \mathrm{int} \, U$ 
is time-optimal, then there exist
a Lipschitzian curve $\wla:[0,\wT] \rightarrow T^*M$ and $p_0\in \{0,1\}$ that satisfy the following equations:
\begin{align}
\wla(t)&\neq 0\qquad \forall \ t\in [0,\wT] \label{eq: nonzero}\\
\pi \wla(t)&=\wxi(t) \qquad \forall \ t\in [0,\wT]\\
\frac{d}{dt}\wla(t)&=\vec{\widehat{F}_t}(\wla(t)) \qquad \forall \ t\in [0,\wT]  \label{PMP Ham}  \\
F_i(\wla(t))&=0 \qquad \forall \ i=1,\ldots,m \quad \forall \ t\in [0,\wT] \label{eq Fi=0} \\
\widehat{F}_t(\wla(t))&=F_0(\wla(t))=p_0 \qquad \forall \ t\in [0,\wT] \label{eq p0} \\
\wla(0)|_{T_{\wq_0}N_0}=0 & \qquad \wla(\wT)|_{ T_{\wq_f}N_f }=0.\label{eq: trans cond}
\end{align}
The Lipschitzian curves that satisfy equations \eqref{eq: nonzero}--\eqref{eq: trans cond}
 are called \emph{extremals}.
If $p_0=1$ we 
say the the extremal $\wla$ is \emph{normal}, while in the other case we say that it is \emph{abnormal}.

\begin{ass} We assume that the reference triple satisfies the PMP in the normal form, and we call 
the extremal $\wla$ \emph{reference extremal}.
\end{ass}
\noindent

By differentiating with respect to time, we obtain the following condition:
\begin{equation} \label{eq F0i+Fij}
F_{0i}(\wla (t)) + \sum_{j=1}^m \widehat{u}_j(t) F_{ji}(\wla (t))=0 \qquad i=1,\ldots,m,\quad  \mathrm{a.e.} \ t \in [0,\wT].
\end{equation}

In literature additional necessary conditions for the optimality of a singular extremal are known (see \cite{AgSac}). 
Namely, if the reference triple is optimal, then there exists an extremal $\lambda$ associated with the reference triple that satisfies 
the following conditions: 

\smallskip
\noindent
\textbf{(Goh condition)} 
\[
F_{ij}(\lambda(t))=0 \qquad \forall \ i,j=1,\ldots,m,\  t\in[0,\wT].
\]

\smallskip
\noindent
\textbf{(Generalized Legendre Condition)}  the quadratic form 
\begin{equation}
\mathbb{L}_{\lambda(t)}: \mathbf{v} \mapsto \sum_{i,j=1}^m \mathrm{v}_i \mathrm{v}_j F_{ij0} ({\lambda}(t)) + 
\sum_{i,j,k=1}^m \mathrm{v}_i \mathrm{v}_j \widehat{u}_k(t) F_{ijk}({\lambda}(t)) 
\label{eq: leg quad form} 
\end{equation}
is non-positive for any $\mathbf{v}=(\mathrm{v}_1,\ldots,\mathrm{v}_m)\in \mathbb{R}^m$ and for a.e. $t\in[0,\wT]$. 

\begin{remark}
Notice that the matrix $\mathbb{L}_{\lambda(t)}$ is symmetric by \eqref{eq F0i+Fij} and Jacobi identity.  
\end{remark}

We strengthen the two necessary conditions above defined. 
\begin{ass} {\bf(High Order Goh Condition)} \label{ass GGoh}
We assume that the reference extremal $\widehat{\lambda}$ satisfies the following equations
\[
\langle \wla(t), f (\wxi(t))\rangle =0 \qquad \forall \, f\in \Lie{\mathfrak{f}},\  t\in[0,\wT] .
\]
\end{ass}

HOGC is a stronger condition than the usual one, but in our case the optimality of the singular extremal is proved also when $U=\mathbb{R}^m$; in Appendix
\ref{app variations} we show that, for $U=\mathbb{R}^m$, HOGC is a necessary optimality condition. 
As a matter of fact,
if the Lie algebra generated by the controlled vector fields is 2-step bracket generating, then
HOGC coincides with Goh condition.

Remark moreover that, under Assumption~\ref{ass GGoh}, the quadratic form $\mathbb{L}_{\wla(t)}$ is given by
\begin{equation} \label{eq: L}
\mathbb{L}_{\wla(t)}: \mathbf{v} \mapsto \sum_{i,j=1}^m v_i v_j F_{ij0} ({\wla}(t)),
\end{equation}
so that it is continuous as a function of time.

\begin{ass} {\bf (Strengthened Generalized Legendre Condition)} \label{ass SGLC}
There exists a constant $c>0$ such that 
\begin{equation} \label{eq: SGLC}
\mathbb{L}_{\wla(t)} [\mathbf{v}]^2 \leq - c |\mathbf{v}|^2
\end{equation}
for any $\mathbf{v}=(\mathrm{v}_1,\ldots,\mathrm{v}_m)\in \mathbb{R}^m$ and for every $t\in[0,\wT]$.
\end{ass}

As a consequence of Assumptions \ref{ass lie}--\ref{ass GGoh} and equation \eqref{eq F0i+Fij}, we get that 
\begin{align}
  \langle \wla(t),[f_0,f](\wxi(t))\rangle&=0 \qquad  \forall \, f 
\in \Lie{\mathfrak{f}}, \quad \forall \, t\in[0,\wT] \label{eq: reg3}\\
\sum_{j=1}^m (\mathbb{L}_{\wla(t)})_{ij} \widehat{u}_j(t)&=F_{00i} (\wla(t)) \qquad i=1,\ldots,m \quad \mathrm{a.e.} \ t\in[0,\wT].
\label{F00i+Lu}
\end{align}
From \eqref{F00i+Lu} and Assumption \ref{ass SGLC} we can recover the reference control as feedback on the cotangent bundle and, by induction, we can 
prove that it is smooth.

From now on we restrict to a (full-measure) neighborhood $\U$ of $\wla([0,\wT])$ in $T^*M$ where SGLC is satisfied, that is, where the quadratic form $\mathbb{L}_{\ell}$ 
is negative-definite. 
We define two submanifolds of $\U$ which are crucial for our construction: 
\begin{align}
\Sigma&=\{\ell \in \U   : \langle \ell,f(\pi \ell)\rangle=0\ \forall \,  f 
\in \Lie{\mathfrak{f}} \} \label{eq: sigma}  \\
\esse&=\{\ell \in \Sigma  :  \langle \ell,[f_0,f] (\pi \ell) \rangle=0 \ \forall \,  f 
\in \Lie{\mathfrak{f}}\} \label{eq: S}. 
\end{align}

By Assumption \ref{ass lie}, possibly restricting $\U$, $\Sigma$ is an embedded manifold of codimension $R$.
Moreover every singular extremal that satisfies HOGC is contained in $\esse$. We set the following regularity assumption on $\esse$, 
which requires that it is a submanifold of maximal dimension (see the
arguments below).

\begin{ass}[\bf Regularity of $\esse$] \label{ass F_0i=0}
The manifold $\esse$ has constant codimension $m$ in $\Sigma$.
\end{ass}

Thanks to regularity assumptions, the manifolds $\Sigma$ and $\esse$ have the following properties. The proofs can be obtained adapting 
those in \cite{ChiSte}.

\begin{enumerate}[(P1)]
\item It is easy to see that the Lie algebra Lie$_{\ell}(\vec{F}_1,\ldots,\vec{F}_m)$ 
has dimension $R$ for every $\ell \in \Sigma$. Moreover, every vector field $X \in $ Lie$_{\ell}(\vec{F}_1,\ldots,\vec{F}_m)$
is tangent to 
$\Sigma$.
\item It is not difficult to prove that SGLC implies 
that the vector fields $f_{01},\ldots,f_{0m}$ are linearly independent, and their span is transversal to
Lie$(\mathfrak{f})$. Therefore
$\esse$ has codimension at least $m$ in $\Sigma$, and Assumption \ref{ass F_0i=0} states then that $\esse$ has the maximal dimension. The arguments above prove also
that
$\esse$ it can be characterized by
\[
\esse=\{\ell\in \Sigma: F_{0i}(\ell)=0  \ \forall \, i=1,\ldots,m\}.
\]
Notice that the existence of a normal singular extremal satisfying HOGC implies that $R+m\leq n-1$ and that $f_0$ is transversal to Lie$(\mathfrak{f})$,
in a neighborhood of the corresponding trajectory on $M$.
Moreover, Assumption \ref{ass F_0i=0} is equivalent to the following one:
\[[f_0,f]\in \Lie{\mathfrak{f}} + \mathrm{span}(\{f_{01},\ldots,f_{0m}\}) \qquad \forall \, f \in \Lie{\mathfrak{f}}.\]
\item Similar arguments show that $\vec{F}_{01},\ldots,\vec{F}_{0m}$ are transversal to $\Sigma$, and $\vec{F}_1,\ldots,\vec{F}_m$ are transversal to $\esse$. 
\item $\vec{F}_0$ is tangent to $\Sigma$ in $\esse$.
\item 
Our assumptions guarantee the existence of  a Hamiltonian vector field tangent to all singular extremals.
Indeed, setting for every $\ell \in \U$
\begin{equation} \label{feedback}
\boldsymbol{\nu}(\ell)=\mathbb{L}^{-1}_{\ell} 
\begin{pmatrix}
F_{001}(\ell)\\
\vdots\\
F_{00m}(\ell)
\end{pmatrix},
\end{equation}
we get that the vector field $\vec{F}_{\esse}=\vec{F}_{0}+\sum_{i=1}^m \nu_i \vec{F}_i$ is tangent to $\esse$, 
and the reference extremal $\wla(\cdot)$ is an integral curve of $\vec{F}_S$. Indeed every singular extremal associated with our
dynamics is an integral curve of $\vec{F}_S$.
\end{enumerate}

\section{Second variation} \label{sec secvar}
In this section we define the second variation for the problem under study, and we investigate the consequences of the coercivity of the second variation.
The computations can be recovered by adapting those present in \cite{PoggStebsb,SteRoma}. 

\subsection{Construction of the second variation}
We consider the second variation associated with the sub-problem of \eqref{eq: min T}-\eqref{eq: contr sys} with fixed 
final point, that is we add the constraint $\xi(T)=\wq_f$.
To compute this second variation, we transform such minimum-time problem into a
Mayer problem on the fixed time interval $[0, \wT]$ and on the
state space $\mathbb{R} \times M$.  Namely, putting $u_0$ as a new constant control
with positive values, we reparametrize the time as $u_0 t$, and we set $\boldsymbol{q} = (q^0, q)\in\mathbb{R}
\times M$,  ${\boldsymbol f}_0 (\boldsymbol{q}) = f_0(q) + \frac{\partial}{\partial q^0}$ and ${\boldsymbol f}_i ({\boldsymbol q})
 = f_i(q)$, $i=1,\ldots,m$. 
Then the minimum-time problem between $N_0$ and $\wq_f$ is equivalent to the Mayer
problem on $\mathbb{R} \times M$ described below.
\begin{equation}\label{eq: sistc}
  \min  \xi^0(\wT)
\end{equation}
subject to
\begin{equation}
\begin{cases}\label{eq: sistc1}
\dot{\boldsymbol{\xi}}(t) = u_0  \boldsymbol{f}_0(\boldsymbol{\xi}(t)) + \sum_{i=1}^m u_0
  u_i(t)  \boldsymbol{f}_i(\boldsymbol{\xi}(t)) \quad t \in [0, \wT]    \\
 \boldsymbol{\xi}(0) \in \{0\} \times N_0, \quad
  \boldsymbol{\xi}(\wT) \in \mathbb{R} \times \{ \wq_f\}     \\
   (u_0, \mathbf{u}) \in (0, +\infty) \times L^{\infty}([0,\wT],U)
\end{cases}
\end{equation}
where $\boldsymbol{\xi}=(\xi^0,\xi) $.
It is not difficult to see that the trajectory $\widehat{\boldsymbol{\xi}}(t) = (t,\wxi(t)) $, 
associated with the controls $u_0= 1$ and $\bf{u}=\widehat{u}$, 
is an extremal with associated adjoint covector 
$\widehat{\boldsymbol{\lambda}} :
s\mapsto ( (-1, t), \wla(t)) \in \mathbb{R}^* \times T^*M$.

For $t \in [0,\widehat{T}]$, we define the evolution map
$\widehat{S}_t : M  \rightarrow M $ by its action $\widehat{S}_t : x_0  \mapsto \xi(t)$, where
$\xi$ is the solution of the equation $\dot{\xi} = f_0(\xi) + \sum_{i=1}^m \widehat{u}_i f_i(\xi)$ with initial condition $\xi(0)=q_0$. In particular, $\widehat{S}_t(\wq_0)= \widehat{\xi}(t)$.
We locally define around $\wq_0$ the \emph{pull-back vector fields} 
\[
g_t^i = \widehat{S}_{t*}^{-1} f_i \circ \widehat{S}_t,\qquad i=0,\ldots,m. 
\]
Analogously, for the Mayer problem we define the evolution $\widehat{\bf S}_t : \mathbb{R} \times M \to\mathbb{R} \times M$ 
as $\widehat{\bf S}_t : (q^0, q) \mapsto (q^0+t,\widehat{S}_t (q))$,
and
the 
pull-back system of \eqref{eq: sistc1} corresponding to the reference control $\widehat{\mathbf{u}}$ as 
\[\boldsymbol{\eta}(t)=\widehat{\bf S}_t^{-1} \circ \boldsymbol{\xi}(t).\]
The Mayer problem \eqref{eq: sistc}-\eqref{eq: sistc1} is then equivalent to the following one:
\[
\min \eta^0(\wT)
\]
subject to the control system
\begin{equation}
\begin{cases}
\dot{\eta}^0(t)=u_0-1\\
\dot{\eta}(t)=(u_0 -1) g_t^0(\eta(t)) + \sum_{i=1}^m(u_0 u_i(t)-\widehat{u}_i(t)) g_t^i(\eta(t))\\
\boldsymbol{\eta}(0)\in \{0\} \times N_0 \qquad \boldsymbol{\eta}(\wT)\in \mathbb{R} \times \{\wq_0\}.
\end{cases}
\end{equation}

Let us now consider variations  $(\delta u_0,\delta x,\delta u)\in \mathbb{R} \times T_{\wq_0}N_0 \times L^{\infty} ([0,\widehat{T}],\mathbb{R}^m)$ 
around the  
reference trajectory, and
let us evaluate the coordinate-free second variation of the Mayer problem,
following \cite{ASZ98}.
We choose any two smooth functions $\boldsymbol{\alpha},\boldsymbol{\beta} : \mathbb{R} \times M \to \mathbb{R}$ that satisfy the following constraints:
\begin{gather}
\boldsymbol{\alpha}(q^0,q)=\alpha(q)-q^0, \qquad \alpha|_{N_0} \equiv 0,  \qquad d\alpha(\wq_0) = \widehat{\lambda}(0) \label{eq: alpha},\\
\boldsymbol{\beta}(q^0,q)=q^0+\beta(q), \qquad  d\beta(\wq_0) =  - \widehat{\lambda}(0) , \label{eq: beta} 
\end{gather}
for two suitable smooth functions $\alpha,\beta : M \to \mathbb{R}$. 
Thanks to High Order Goh Conditions, we can choose the function $\alpha$ in such a way that it satisfies the constraint
$\alpha|_{\mathcal{I}_{\wq_0}}\equiv 0$, where $\mathcal{I}_{\wq_0}$ is the integral manifold of the distribution $\Lie{\mathfrak{f}}$ passing through $\wq_0$.
Moreover, we can choose $\beta=-\alpha$, since the second variation does not depend on the particular choice of $\alpha$ and $\beta$ with the properties \eqref{eq: alpha}
and \eqref{eq: beta} (see \cite{ASZ98}).

The second variation is given by
\begin{align}  
J^{\prime \prime} [(\delta x,\delta u_0,\delta u)]^2 &=  
\int_0^{\widehat{T}} 
\delta u_0 L_{\delta \boldsymbol{\eta}(t)} L_{\boldsymbol{g}_t^0} \widehat{\boldsymbol{\beta}}(\wT,\wq_0)+\sum_{i=1}^m (\delta u_0 \widehat{u}_i(t) + 
\delta u_i(t)) L_{\delta \boldsymbol{\eta}(t)} L_{\boldsymbol{g}_t^i} \widehat{\boldsymbol{\beta}}(\wT,\wq_0)  \;dt,
\label{eq: secvar sing}
\end{align}
\noindent 
where 
$\delta \boldsymbol{\eta}(t) \in \mathbb{R}\times T_{\wq_0}M $ is the linearization of $\boldsymbol{\eta}(t)$ and satisfies the following system:
\begin{equation} \label{eq: delta eta}
\begin{cases}
\dot{\delta \eta^0}(t)=\delta u_0\\
\dot{\delta\eta}(t) = \delta u_0 g_t^0(\wq_0) +\sum_{i=1}^m (\delta u_0 \widehat{u}_i(t)+ \delta u_i(t)) g_t^i(\wq_0) \\ 
\delta \boldsymbol{\eta} (0) = (0,\delta x) \in \{0\} \times T_{\wq_0} N_0, \quad \delta \boldsymbol{\eta}  (\widehat{T}) \in \mathbb{R} \times \{0\}.
\end{cases}
\end{equation}

\begin{remark}
If $\delta \boldsymbol{\eta}$ satisfies the system \eqref{eq: delta eta}, then the value of the second variation does not depend on the particular choice of $\alpha$
and $\beta$, provided that they satisfy properties \eqref{eq: alpha}-\eqref{eq: beta} (see \cite{ASZ98}). 
Then $J^{\prime \prime}$ is well defined and coordinate free.
\end{remark}

Since we are interested only in the so-called time-state local optimality, we restrict us to the
subproblem with $\delta u_0 = 0$, and, proceeding as in \cite{PoggStebsb}, we define $\boldsymbol{w}(\cdot)$ and $\boldsymbol{\epsilon}$ by
\begin{gather}
w_i(t)=   \int_t^{\wT} \delta u_i(s) \; ds \\
\epsilon_i = w_i(0),   \label{eq:numero2}
\end{gather}
for $i=1,\ldots,m$.
In this way, the control variation $\delta u$ is embedded as the pair 
$(\boldsymbol{\epsilon},\boldsymbol{w}(\cdot))$ in the space $\mathbb{R}^{m} \times L^2 ([0,\widehat{T}],\mathbb{R}^m)$.
We remark that this embedding is continuous and it has dense image.
Then the second variation defined by \eqref{eq: secvar sing}-\eqref{eq: delta eta} writes as 
\begin{align}  
J^{\prime \prime} [(\delta x,\boldsymbol{\epsilon},\boldsymbol{w}(\cdot))]^2 &= 
\frac{1}{2} \sum_{i,j=1}^m \left( L_{ \epsilon_if_i} L_{  \epsilon_j f_j} \beta(\wq_0) + 
\int_0^{\widehat{T}} w_i(t) w_j(t) L_{[\dot{g}_t^i,g_t^j]}   \beta(\wq_0) \;dt \right) \nonumber \\
&+ \sum_{i=1}^m \left( L_{\delta x}L_{ \epsilon_if_i} \beta(\wq_0) +     
\int_0^{\widehat{T}} w_i(t) L_{\zeta(t)} L_{\dot{g}_t^i} \beta(\wq_0) \; dt \right).\label{eq: secvar}
\end{align}
where
the function $\zeta :[0,\wT] \to T_{\wq_0}M $ is the solution of the equation
\begin{equation} \label{eq: zeta dot}
\dot{\zeta}(t) = \sum_{i=1}^m  w_i(t) \dot{g}_t^i(\wq_0),
\end{equation}
\noindent
with boundary conditions
\begin{equation} \label{eq: zeta bc1}
\zeta(0) = \delta x + \sum_{i=1}^m \epsilon_i f_i(\wq_0) , \qquad \zeta({\widehat{T}}) =0.
\end{equation}

Let us observe that the second variation is realized as a linear-quadratic control problem in the state-variable $\zeta$, with control $\boldsymbol{w}$ 
(see \cite{ASZ98,SteRoma,SteZez97}). 
Notice moreover that 
\[\dot{g}_t^i(\wq_0)=\widehat{S}_{t*}^{-1}[f_0,f_i]\circ \wxi(t)
\quad \mbox{ and } \quad
L_{[\dot{g}_t^i,g_t^j]}   \beta(\wq_0) = -F_{ij0}(\wla(t)).\] 
Finally, thanks to the choice of $\beta$, the finite-dimensional term in \eqref{eq: secvar} is null.

It is clear that, if $T_{\wq_0}N_0\cap \mathrm{span}(\{f_1(\wq_0),\ldots,f_m(\wq_0)\})\neq 0$, then 
the above defined quadratic form cannot be coercive.
On the other hand, the paradigm exposed in the introduction requires that the flow of 
the super-Hamiltonian emanating from $\Sigma$
remains contained in $\Sigma$, as Theorem \ref{cond suff lib-fix}
in Section \ref{sec results} describes (see also \cite{PoggStebsb,SteRoma});
in particular, the horizontal Lagrangian sub-manifold of the initial constraints must be contained in $\Sigma$.
These arguments suggest to require coercivity of \eqref{eq: secvar} allowing all vectors in $\mathrm{Lie}_{\wq_0}(\mathfrak{f})$ 
to be initial conditions for the state $\zeta$; see Section \ref{sub 2var} and the proof of Theorem \ref{th: result}
in Section \ref{sec results}. In particular, this means to consider the second variation for the minimum time problem from
$ \mathcal{I}_{\wq_0}$ to $\wq_f$.

This is not surprising, since our result holds true also for unbounded controls; indeed, if $U=\mathbb{R}^m$, for every two points $q_1,q_2 \in \mathcal{I}_{\wq_0}$, there exist a sequence of times $t_k\to 0$ and a sequence of controls ${\bf u}_k$
such that the sequence $t_k\mapsto \{\xi_k(q_1,{\bf u}_k,t_k)\}_k$ of the solutions at time $t_k$ of the system \eqref{eq: contr sys}
starting from the point $q_1$ and relative to the control ${\bf u}_k$ tends to $q_2$ (see \cite[Lemma 4.1, Corollary 4.1, Remark 4.1]{BacSte}). 
As a consequence, if the controls are not bounded,
the infimum of the time for moving inside $\mathcal{I}_{\wq_0}$ is zero, therefore we can think
that, in some sense, if $T$ is the minimum time for joining $\wq_0$ with $\wq_f$ , then it is also
the infimum of the time for reaching $\wq_f$ from $\mathcal{I}_{\wq_0}$.

This digression suggests us the suitable coercivity assumption for this problem.
\begin{ass} \label{ass coercivity}
For $(\boldsymbol{\epsilon},\boldsymbol{w}(\cdot)) \in \mathbb{R}^R \times L^2([0,\wT],\mathbb{R}^m)$, let $\zeta(\cdot)$
be the solution of the control system
\begin{equation}
\begin{cases} \label{eq: zeta extended}
\dot{\zeta}(t) = \sum_{i=1}^m  w_i(t) \dot{g}_t^i(\wq_0)\\
\zeta(0) = \sum_{i=1}^R \epsilon_i f_i(\wq_0),
\end{cases}
\end{equation}
where $f_{m+1},\ldots,f_R$ are  some locally defined vector fields chosen to complete the basis for 
Lie$(\mathfrak{f})$ in a neighborhood of 
$\wq_0$. 
The quadratic form 
\begin{equation}  
J^{\prime \prime} [(\boldsymbol{\epsilon},\boldsymbol{w}(\cdot))]^2 = 
\frac{1}{2} \sum_{i=1}^m \left(\int_0^{\widehat{T}} 2w_i(t) L_{\zeta(t)} L_{\dot{g}_t^i} \beta(\wq_0) +\sum_{j=1}^m w_i(t) w_j(t) L_{[\dot{g}_t^i,g_t^j]}   
\beta(\wq_0) \;dt \right) \label{eq: secvar fin}
\end{equation}
is coercive on the subspace $\mathcal{W}$ of $\mathbb{R}^R \times L^2([0,\wT],\mathbb{R}^m)$ defined by the constraint $\zeta(\wT)=0$.
\end{ass}
\noindent
\subsection{Consequences of the coercivity assumption}\label{sub 2var}

Let us now introduce a special coordinate frame in a neighborhood of $\wq_0$, 
completing the set $\{f_1,\ldots,f_m\}$ 
with $n-m$ locally defined vector fields $f_{m+1},\ldots,f_n$ such that $\{f_1,\ldots,f_n\}$ 
is a basis for $T_qM$, and $\{f_1,\ldots,f_R\}$ is a basis for Lie$(\mathfrak{f})$,
in a neighborhood of
$\wq_0$.
The coordinate frame is the inverse of the map $\Upsilon : \mathbb{R}^n \to M$ defined as
\begin{equation} \label{frame}
\Upsilon(x_1,\ldots,x_n) = \exp(x_1f_1)\circ\exp(x_2f_2)\circ \cdots \circ \exp(x_nf_n) (\wq_0).
\end{equation}

In particular, $\Upsilon^{-1}(\wq_0)=(0,\ldots,0)$,
for $j=1,\ldots,n$ we have
\[
\frac{\partial}{\partial x_j} \Big|_{(0,\ldots,0)}=f_j(\wq_0)
\]
 and
\begin{equation} \label{eq:framesigma}
L_fx_i\equiv 0 \, ,\ \forall f\in \mathrm{Lie}(\mathfrak{f})\, ,\ i=R+1,\ldots,n.
\end{equation}

If we denote with
$(\widehat{p}_1,\ldots,\widehat{p}_n)$ the coefficients of $\well_0$ in this coordinate frame, then
it is easy to see that $\well_0=\sum_{i=R+1}^n \widehat{p}_i dx_i$. 

Define the symmetric 2-form 
\[
\Omega=\frac{1}{2}\sum_{i=R+1}^n dx_i \otimes dx_i,
\]
and extend it on the whole $T_{\wq_0}M \times L^2([0,\wT],\mathbb{R}^m)$
putting $\Omega [(\delta x,\boldsymbol{w}(\cdot))]^2=
\frac{1}{2}\sum_{i=R+1}^n (\delta x_i)^2$.

Then, if Assumption \ref{ass coercivity} holds, we can apply \cite[Theorem 13.2]{hestenes} to conclude that there exists a $\rho>0$ such that the form
\begin{equation} \label{J rho}
\J_{\rho}= J^{\prime \prime}+\rho \Omega 
\end{equation}
is coercive on the subspace $\widetilde{\mathcal{W}}\in T_{\wq_0}M \times L^2([0,\wT],\mathbb{R}^m)$ of the 
variations such that the solutions of the system \eqref{eq: zeta dot} satisfy
\begin{equation} \label{eq: zeta bc2}
\zeta(0) \in T_{\wq_0} M, \qquad \zeta(\wT)=0.
\end{equation}
Namely, $\J_{\rho}$ represents the second variation of the linear-quadratic problem associated with the variable $\zeta$, with 
\emph{free} initial condition and fixed final condition.

The Hamiltonian $H_t^{\prime \prime}: T^*_{\wq_0}M\times T_{\wq_0}M \to \mathbb{R}$ associated with this linear-quadratic problem is given by 
\begin{equation} \label{H''}
H_t^{\prime \prime}(\omega,\delta x)
=
\frac{1}{2}\mathbb{L}_{\wla(t)}^{-1} 
\left[\begin{pmatrix}
\langle \omega, \dot{g}_t^1 \rangle  +  L_{\delta x} L_{\dot{g}_t^1} \beta (\widehat{q}_0) \\
\vdots \\
\langle \omega, \dot{g}_t^m \rangle  + L_{\delta x} L_{\dot{g}_t^m} \beta (\widehat{q}_0)
\end{pmatrix}\right]^2,
\end{equation}
where  and $\mathbb{L}_{\wla(t)}^{-1}$ has to be thought of as a
quadratic form on $\mathbb{R}^m$, for every $t$ (see \cite{ASZ98,ChiSte}).

Set
\begin{equation} \label{L primo primo}
L^{\prime \prime}=\{(-2\rho\Omega (\delta x,\cdot),\delta x) : \delta x \in T_{\wq_0}M\}.
\end{equation}
The quadratic form \eqref{J rho} is coercive on the space $\widetilde{\mathcal{W}}$ if and only if
\begin{equation} \label{new inclusion}
\ker \pi \mathcal{H}_t^{\prime \prime} |_{L^{\prime \prime}} =\{0\}
\end{equation}
for every $t\in [0,\wT]$ (see \cite{SteZez97}).

\section{Geometry near the reference extremal} \label{sec geometry}

In this section we state the geometric properties of the vector fields and the Hamiltonians linked to
our system and we define the super-Hamiltonian, stating its properties.

In a neighborhood of the reference extremal, $\Sigma$ can be described as $\esse\times [-\epsilon,\epsilon]^m$, for some $\epsilon>0$. 
Indeed, for $\bt=(t_1,\ldots,t_m) \in \mathbb{R}^m$, let us denote with $\bt\bff$ the vector field
$\sum_{i=1}^m t_i\vec{F}_i$,  and let us consider the map
$\ell\mapsto \exp(\bt\bff)(\ell)$ (that is, the solution at time $t=1$ of the equation $\dot{\ell}=\sum_{i=1}^m t_i \vec{F}_i (\ell)$). For a sufficiently small $\epsilon>0$,
 the map 
$$\tpsi: (\ell,\bt)\in \U \times [-\epsilon,\epsilon]^m \mapsto 
\exp(\bt\bff)(\ell)\in T^*M$$
is well defined and it is easy to prove the following:

\begin{proposition} \label{prop: psi}
Possibly restricting $\U$, there exists an $\epsilon>0$ such that  
$\psi: \esse  \times [-\epsilon,\epsilon]^m\to \Sigma $ 
is a diffeomorphism.
\end{proposition}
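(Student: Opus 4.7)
The plan is to verify first that $\psi$ is well-defined as a map into $\Sigma$, then show it is a local diffeomorphism at each point of the zero section $\esse\times\{0\}$ using the inverse function theorem, and finally promote this to a global diffeomorphism by shrinking $\U$ and $\epsilon$.

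First I would check that the image of $\psi$ actually sits in $\Sigma$. By property (P1), each Hamiltonian vector field $\vec F_i$ belongs to $\mathrm{Lie}_\ell(\vec F_1,\ldots,\vec F_m)$ and is therefore tangent to $\Sigma$. Hence every linear combination $\bt\bff = \sum_{i=1}^m t_i \vec F_i$ is tangent to $\Sigma$, so its time-$1$ flow preserves $\Sigma$, and $\psi(\ell,\bt)\in\Sigma$ for $(\ell,\bt)\in\esse\times[-\epsilon,\epsilon]^m$ with $\epsilon$ small enough that the flow is defined on $\U$ for $|\bt|\le\epsilon$.

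Next I would count dimensions and compute the differential of $\psi$ at $(\ell,0)$ for an arbitrary $\ell\in\esse$. By Assumption \ref{ass F_0i=0}, $\dim\esse=\dim\Sigma-m$, so $\dim(\esse\times[-\epsilon,\epsilon]^m)=\dim\Sigma$, and the dimensions match. The differential splits as
\[
d\psi_{(\ell,0)}(v,\bt)=v+\sum_{i=1}^m t_i\vec F_i(\ell),\qquad v\in T_\ell\esse,\ \bt\in\mathbb R^m,
\]
since $\partial_{t_i}\exp(\bt\bff)(\ell)|_{\bt=0}=\vec F_i(\ell)$. Property (P3) tells us that $\vec F_1,\ldots,\vec F_m$ are transversal to $\esse$ inside $\Sigma$; combined with the codimension-$m$ condition, this gives the direct sum decomposition
\[
T_\ell\Sigma=T_\ell\esse\oplus\mathrm{span}\{\vec F_1(\ell),\ldots,\vec F_m(\ell)\},
\]
so $d\psi_{(\ell,0)}$ is a linear isomorphism. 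By the inverse function theorem, $\psi$ is a local diffeomorphism in a neighborhood of each $(\ell,0)$.

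Finally, I would upgrade local to global. Since the reference trajectory $\wla([0,\wT])\subset\esse$ is compact, one can cover it by finitely many coordinate patches on which $\psi$ is a diffeomorphism and then shrink $\U$ around $\wla([0,\wT])$ and $\epsilon$ simultaneously to obtain injectivity on the whole product $\esse\times[-\epsilon,\epsilon]^m$. Surjectivity onto the corresponding slice of $\Sigma$ follows from local diffeomorphism plus a tubular-neighborhood-type argument: after shrinking, every point of $\Sigma$ near the reference extremal is reached by flowing a unique $\ell\in\esse$ for a unique small $\bt$, because the flow directions $\vec F_1,\ldots,\vec F_m$ complement $T\esse$ in $T\Sigma$. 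The only mildly delicate point is this global injectivity step, but it is routine given the transversality established above.
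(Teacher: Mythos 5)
Your proof follows essentially the same route as the paper's: verify the image lies in $\Sigma$ via property (P1), compute $D\psi(\ell,\boldsymbol{0})=\mathrm{id}\times(\vec{F}_1(\ell),\ldots,\vec{F}_m(\ell))$ and invoke property (P3) together with the codimension count from Assumption \ref{ass F_0i=0} to get maximal rank, then pass from local to global via compactness of $\wla([0,\wT])$. Your version simply spells out the direct-sum decomposition $T_\ell\Sigma=T_\ell\esse\oplus\mathrm{span}\{\vec F_1(\ell),\ldots,\vec F_m(\ell)\}$ and the injectivity-by-shrinking step that the paper leaves implicit; the argument is correct.
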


\begin{proof}
Since the whole Lie algebra generated by the $\vec{F}_i$ is tangent to $\Sigma$, then the range of the map $\psi$ restricted to $\esse\times [-\epsilon,\epsilon]$ 
is 
contained in $\Sigma$.
The thesis follows by compactness of the interval $[0,\wT]$, since
$D\psi (\ell,\textbf{0})=\mathrm{id}\times(\vec{F}_1(\ell),\ldots,\vec{F}_m(\ell))$ has maximal rank (see property (P3) at the end of Section
\ref{not e ass}).
\end{proof}

\begin{remark} \label{rem tpsi}
It is easy to see that
\begin{equation}\label{jac psi}
\partial_{t_i} \tpsi(\ell,\boldsymbol{0})=\vec{F}_i (\ell),\qquad i=1,\ldots,m. 
\end{equation}
and that for each function $F$ defined on $\U$
\begin{equation}\label{second psi}
\partial^2_{t_it_j}(F\circ \tpsi)(\ell,\boldsymbol{0})=\frac{1}{2}
\left(L_{\vec{F}_i} L_{\vec{F}_j}+L_{\vec{F}_j} L_{\vec{F}_i} \right)F(\ell),
\qquad i,j=1,\ldots,m. 
\end{equation}
We remark also that $\tpsi$ maps $\Sigma \times [-\epsilon,\epsilon]^m$ into $\Sigma$.
\end{remark}

For singular extremals, the maximized Hamiltonian is well-defined and coincides with $F_0$, but
its associated Hamiltonian vector field is multi-valued: indeed, all the
Hamiltonians of the form $F_0 +\sum_{i=1}^m u_i F_i$, $\mathbf{u} \in U$, coincide and realize the maximum along any extremal contained in $\Sigma$. 
Moreover, no selection of such multi-valued Hamiltonian vector fields is suitable 
to construct the field of non-intersecting state-extremals that we will use to compare the costs associated with the candidate trajectories. 
For an insight in the single-input case, see \cite[Section 4]{PoggStebsb} .
Then, as already done in \cite{ChiSte,PoggStebsb,SteCDC04},
we substitute the maximized Hamiltonian $F_{max}$ with a 
the time-dependent
super-Hamiltonian $H_t=H_0+\sum_{i=1}^m \widehat{u}_i(t) F_i$, where $H_0$ is defined as described below.

The first step is to define a suitable map which turns out to project $\Sigma$ onto $\esse$.
\begin{lemma} \label{lemma: theta}
Possibly restricting $\U$, there exist $m$ smooth functions $\vartheta_i : \U \to \mathbb{R},\ i=1,\ldots,m$, such that, 
denoting with 
$\bth\bff(\ell)$ the vector field $\sum_{i=1}^m\vartheta_i\vec{F}_i$, the map  
\begin{equation*} 
\phi: \ell \in \U \mapsto \exp(\bth\bff)(\ell) \in \U
\end{equation*}
satisfies 
\begin{equation}\label{eq: theta1}
\phi(\Sigma)\subset \esse. 
\end{equation}
Moreover, for every $\ell \in \esse$,  $\delta \ell \in T_{\ell}T^*M$, it holds
\begin{equation} \label{dtheta}
\begin{pmatrix}
\langle d \vartheta_1(\ell),\delta \ell\rangle\\
\vdots\\
\langle d \vartheta_m(\ell),\delta \ell\rangle
\end{pmatrix}=
\mathbb{L}_{\ell}^{-1} 
\begin{pmatrix}
\langle d F_{01}(\ell),\delta \ell\rangle\\
\vdots\\
\langle d F_{0m}(\ell),\delta \ell\rangle
\end{pmatrix}.
\end{equation}
\end{lemma}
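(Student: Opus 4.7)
The plan is to define $\boldsymbol{\vartheta}$ via the Implicit Function Theorem, using the characterization $\esse=\{\ell\in\Sigma : F_{0i}(\ell)=0,\ i=1,\dots,m\}$ from property (P2). Concretely, I would introduce
\[
\Phi:\U\times\mathbb{R}^m\to\mathbb{R}^m,\qquad \Phi_i(\ell,\bt)=F_{0i}\bigl(\exp(\bt\bff)(\ell)\bigr),
\]
and look for $\boldsymbol{\vartheta}(\ell)$ solving $\Phi(\ell,\boldsymbol{\vartheta}(\ell))=0$. Since $F_{0i}\equiv 0$ on $\esse$, we have $\Phi(\ell,\boldsymbol{0})=0$ for every $\ell\in\esse$, so a solution with $\boldsymbol{\vartheta}|_{\esse}=0$ is a natural candidate.

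The key computation is the Jacobian $\partial_{\bt}\Phi(\ell,\boldsymbol{0})$ for $\ell\in\esse$. By \eqref{jac psi} and the definition of the Poisson bracket,
\[
\partial_{t_j}\Phi_i(\ell,\boldsymbol{0})=L_{\vec{F}_j}F_{0i}(\ell)=\{F_j,\{F_0,F_i\}\}(\ell).
\]
Using Jacobi's identity $\{F_j,\{F_0,F_i\}\}+\{F_0,\{F_i,F_j\}\}+\{F_i,\{F_j,F_0\}\}=0$, the middle term vanishes on $\esse$ because $F_{ij}\equiv 0$ on $\Sigma$ and, by property (P4), $\vec{F}_0$ is tangent to $\Sigma$ at points of $\esse$. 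Rewriting the remaining term yields $\partial_{t_j}\Phi_i(\ell,\boldsymbol{0})=-F_{ij0}(\ell)=-(\mathbb{L}_\ell)_{ij}$. By the Strengthened Generalized Legendre Condition (Assumption~\ref{ass SGLC}), $\mathbb{L}_\ell$ is invertible, so the IFT produces smooth functions $\vartheta_1,\dots,\vartheta_m$ defined in a neighborhood of $\esse$ (which, after shrinking $\U$, may be taken to cover $\U$, using compactness of $[0,\wT]$). This guarantees $F_{0i}(\phi(\ell))=0$ for $i=1,\dots,m$.

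To obtain \eqref{eq: theta1}, I would restrict to $\ell\in\Sigma$. Since $\mathrm{Lie}(\vec F_1,\dots,\vec F_m)$ is tangent to $\Sigma$ by property (P1), the flow $\exp(\bt\bff)$ preserves $\Sigma$ for small $\bt$; hence $\phi(\Sigma)\subset\Sigma$. Combined with $F_{0i}\circ\phi\equiv 0$ and the characterization of $\esse$ inside $\Sigma$ from (P2), this gives $\phi(\Sigma)\subset\esse$.

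For the formula \eqref{dtheta}, I would differentiate the identity $F_{0i}\bigl(\phi(\ell)\bigr)\equiv 0$ at a point $\ell\in\esse$. Writing $\phi(\ell)=\Psi(\boldsymbol{\vartheta}(\ell),\ell)$ with $\Psi(\bt,\ell)=\exp(\bt\bff)(\ell)$ and using $\boldsymbol{\vartheta}(\ell)=\boldsymbol{0}$, $\partial_\ell\Psi(\boldsymbol{0},\ell)=\mathrm{id}$, $\partial_{t_j}\Psi(\boldsymbol{0},\ell)=\vec{F}_j(\ell)$, the chain rule gives
\[
0=\langle dF_{0i}(\ell),\delta\ell\rangle+\sum_{j=1}^m \langle d\vartheta_j(\ell),\delta\ell\rangle\,\{F_j,F_{0i}\}(\ell).
\]
Substituting the earlier identity $\{F_j,F_{0i}\}(\ell)=-(\mathbb{L}_\ell)_{ij}$ and inverting $\mathbb{L}_\ell$ yields \eqref{dtheta}. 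The main obstacle is the bracket computation of step two — keeping the sign conventions and the vanishing arguments on $\esse$ straight — since everything else is IFT and a chain-rule calculation.
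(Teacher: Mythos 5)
Your proposal is correct and follows essentially the same path as the paper: define $\Phi(\ell,\boldsymbol{t})=\bigl(F_{0i}\circ\exp(\boldsymbol{t}\boldsymbol{\vec F})(\ell)\bigr)_i$, check that $\Phi(\cdot,\boldsymbol{0})$ vanishes on $\esse$, show $\partial_{\boldsymbol{t}}\Phi(\ell,\boldsymbol{0})=-\mathbb{L}_\ell$, apply the implicit function theorem with compactness of $[0,\wT]$, deduce $\phi(\Sigma)\subset\esse$ from the tangency of $\mathrm{Lie}(\vec F_1,\dots,\vec F_m)$ to $\Sigma$, and obtain \eqref{dtheta} by differentiating $F_{0i}\circ\phi\equiv 0$ at $\ell\in\esse$ via the chain rule. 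The only difference is that you flesh out the Jacobian step (which the paper dismisses as ``easy to show'') with the Jacobi-identity argument, including the observation that $\{F_0,F_{ij}\}$ vanishes on $\esse$ because $F_{ij}|_\Sigma\equiv 0$ and $\vec F_0$ is tangent to $\Sigma$ there; this is a correct and welcome filling of a small gap.
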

\begin{proof}
 Let us consider the function $\Phi: \U \times \mathbb{R}^m\to \mathbb{R}^m$ defined by
\[
\Phi(\ell,\bt)=
\begin{pmatrix}
F_{01} \circ \tpsi(\ell,\bt)\\
\vdots\\
F_{0m} \circ \tpsi(\ell,\bt) 
\end{pmatrix}.
\]
Notice that for every $\ell \in \esse$ we have 
$\tpsi(\ell,\bzero)=\ell$ and then $\Phi(\ell,\boldsymbol{0})=\boldsymbol{0}$.
Moreover, using \eqref{jac psi}, it is easy to show that  $\partial_{\bt}\Phi(\ell,\bzero)=-\mathbb{L}_{\ell}$,
for all $\ell \in \esse$, and then it has rank $m$.  
The implicit function theorem and the compactness of the interval $[0,\wT]$ ensure the existence of $m$ 
smooth functions $\vartheta_1,\ldots,\vartheta_m$, defined in a neighborhood of the reference extremal,
such that 
\begin{equation}\label{eq:imp}
F_{0i}\circ\phi(\ell)\equiv 0\, ,\ i=1,\ldots,m\, .  
\end{equation}
 Without loss of generality, we can assume that this 
neighborhood is $\U$.
Since $\exp(\bth\bff)(\ell) \in \Sigma$ for every $\ell \in \Sigma$, then \eqref{eq:imp} implies \eqref{eq: theta1}.

Fix $\ell \in \esse$; recalling that $\bth(\ell)=\textbf{0}$, thanks to \eqref{eq:imp} and \eqref{jac psi}, 
we get for every $\delta \ell \in T_{\ell} T^*M$ and $i=1,\ldots,m$
\begin{align*}
0&=
\langle dF_{0i}(\phi(\ell)), \partial_{\ell}\tpsi(\ell,\bt)|_{\bt=\bth(\ell)}  \delta \ell\rangle
+ \sum_{j=1}^m \langle dF_{0i}(\phi(\ell)),\partial_{t_j} \tpsi(\ell,\bt)|_{\bt=\bth(\ell)} \rangle
 \langle d \vartheta_j(\ell),\delta \ell \rangle\\
&=\langle dF_{0i} (\ell) , \delta \ell\rangle + \sum_{j=1}^m F_{j0i} (\ell) \langle d \vartheta_j(\ell),\delta \ell \rangle
\end{align*}
and then \eqref{dtheta}.
\end{proof}
\begin{remark}
It is easy to see that, as a particular case of \eqref{dtheta}, we get
\begin{equation} \label{derivata theta}
\langle d\vartheta_i(\ell),\vec{F}_j(\ell)\rangle=-\delta_{ij} \qquad \forall \ \ell \in \esse, \quad i,j=1,\ldots,m.
\end{equation}
  \end{remark}
The Hamiltonian $H_0$ is defined by means of the map $\phi$, as follows. 
\begin{definition} \label{def H_0}
We define the Hamiltonian function $H_0: \U \to \mathbb{R}$ as
\[
H_0(\ell)= F_0 \circ \phi  (\ell)
\]  
and we set
\[\chi = H_0-F_0.\]
\end{definition}
In order to prove the properties of $H_0$, we start by computing the first and second derivatives of $\chi$ on $\esse$.
\begin{proposition} \label{prop chi}
For every $\ell\in\esse$ and $\delta\ell\in T_{\ell}T^*M$, it holds
 $\langle d\chi(\ell),\delta \ell  \rangle =0$ and
\begin{equation} \label{hessiano chi}
 d^2\chi (\ell)[\delta \ell]^2= -  \sum_{r,s=1}^m  (\mathbb{L}_{\ell}^{-1})_{rs} \langle dF_{0r}(\ell),\delta \ell  \rangle
\langle dF_{0s}(\ell)	,\delta \ell  \rangle.
\end{equation}
\end{proposition}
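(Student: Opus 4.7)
The plan is to expand $F_0\circ\tpsi(\ell,\bt)$ to second order in $\bt$ using the derivatives computed in Remark~\ref{rem tpsi}, then substitute $\bt=\boldsymbol{\vartheta}(\ell)$ and exploit the fact that both $\boldsymbol{\vartheta}$ and $F_{0i}$ vanish on $\esse$. This reduces everything to an algebraic manipulation using \eqref{dtheta}.

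First I would fix $\ell_0\in\esse$ and pick a smooth curve $\ell(s)$ in $T^*M$ with $\ell(0)=\ell_0$, $\dot\ell(0)=\delta\ell$, and set $\bt(s)=\boldsymbol{\vartheta}(\ell(s))$. Writing $\chi(\ell(s))=F_0(\tpsi(\ell(s),\bt(s)))-F_0(\ell(s))$, I would combine \eqref{jac psi}, \eqref{second psi} and the identities $\{F_j,F_0\}=-F_{0j}$, $L_{\vec F_i}L_{\vec F_j}F_0=F_{ij0}=(\mathbb{L}_\ell)_{ij}$ to write the Taylor expansion
\[
\chi(\ell(s))=\sum_j \vartheta_j(\ell(s))\,F_{j0}(\ell(s))+\tfrac{1}{2}\sum_{i,j}\vartheta_i(\ell(s))\vartheta_j(\ell(s))F_{ij0}(\ell(s))+O(|\boldsymbol{\vartheta}(\ell(s))|^{3}).
\]
Since $\ell_0\in\esse$ implies $\boldsymbol{\vartheta}(\ell_0)=\mathbf{0}$, the remainder is of order $s^3$, and every term with an uncontracted $\vartheta_j(\ell_0)$ or $F_{j0}(\ell_0)$ factor drops out when evaluating at $s=0$.

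Differentiating once in $s$ at $s=0$ therefore gives zero, which proves the first assertion $\langle d\chi(\ell_0),\delta\ell\rangle=0$. Differentiating twice, the only surviving contributions are the mixed term $2\sum_j\langle d\vartheta_j(\ell_0),\delta\ell\rangle\langle dF_{j0}(\ell_0),\delta\ell\rangle$ and the pure quadratic term $\sum_{i,j}\langle d\vartheta_i(\ell_0),\delta\ell\rangle\langle d\vartheta_j(\ell_0),\delta\ell\rangle(\mathbb{L}_{\ell_0})_{ij}$. Using $dF_{j0}=-dF_{0j}$ and expressing $\langle d\vartheta_j(\ell_0),\delta\ell\rangle$ via \eqref{dtheta}, the mixed term becomes $-2\sum_{r,s}(\mathbb{L}_{\ell_0}^{-1})_{rs}\langle dF_{0r},\delta\ell\rangle\langle dF_{0s},\delta\ell\rangle$, while the pure quadratic term, after using the symmetry of $\mathbb{L}_{\ell_0}^{-1}$ and the identity $\mathbb{L}_{\ell_0}^{-1}\mathbb{L}_{\ell_0}\mathbb{L}_{\ell_0}^{-1}=\mathbb{L}_{\ell_0}^{-1}$, contributes $+\sum_{r,s}(\mathbb{L}_{\ell_0}^{-1})_{rs}\langle dF_{0r},\delta\ell\rangle\langle dF_{0s},\delta\ell\rangle$. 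Adding them produces exactly formula \eqref{hessiano chi}.

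The main obstacle is bookkeeping rather than substance: one must carefully track which terms die because of $\boldsymbol{\vartheta}(\ell_0)=\mathbf{0}$ versus $F_{0j}(\ell_0)=0$, keep the sign convention $\{F_j,F_0\}=-F_{0j}$ straight, and apply $\mathbb{L}^{-1}\mathbb{L}\mathbb{L}^{-1}=\mathbb{L}^{-1}$ (using that $\mathbb{L}_{\ell_0}$ and hence $\mathbb{L}_{\ell_0}^{-1}$ is symmetric) to collapse the second contribution. Everything else follows mechanically from the Taylor data in Remark~\ref{rem tpsi} and from Lemma~\ref{lemma: theta}.
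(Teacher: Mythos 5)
Your proof is correct and, in substance, it follows the same route as the paper's: both differentiate the composite $\ell\mapsto F_0\bigl(\tpsi(\ell,\boldsymbol{\vartheta}(\ell))\bigr)$, exploit that $\boldsymbol{\vartheta}(\ell)=\mathbf{0}$ and $F_{0j}(\ell)=0$ on $\esse$, and then close with \eqref{dtheta}. The difference is purely organizational: the paper applies the chain rule for the Hessian of the composite directly in a chart, which produces terms in $D^2F_0$, $\partial^2_{\ell\ell}\tpsi$ and $D^2\vartheta_i$ that must then be observed to vanish or cancel, whereas your preliminary Taylor expansion of $F_0\circ\tpsi(\ell,\bt)$ in $\bt$ packages those cancellations into the $O(|\bt|^3)$ remainder from the start, so only the first two coefficients ever appear. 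Both arguments funnel into the same intermediate expression
\[
-2\sum_{j}\langle d\vartheta_j(\ell),\delta\ell\rangle\langle dF_{0j}(\ell),\delta\ell\rangle+\sum_{i,j}\langle d\vartheta_i(\ell),\delta\ell\rangle\langle d\vartheta_j(\ell),\delta\ell\rangle\,F_{ij0}(\ell),
\]
which collapses to \eqref{hessiano chi} exactly as you describe via \eqref{dtheta} and the symmetry of $\mathbb{L}_{\ell}$. Two small things worth keeping explicit in a write-up: the coefficient of the quadratic term in your $\bt$-expansion is really $\tfrac12\bigl(L_{\vec F_i}L_{\vec F_j}+L_{\vec F_j}L_{\vec F_i}\bigr)F_0$ from \eqref{second psi}, which only becomes $F_{ij0}=(\mathbb{L}_\ell)_{ij}$ after the symmetric sum over $i,j$ (and the symmetry of $\mathbb{L}$ on $\esse$); and the remainder is genuinely $O(s^3)$ along your curve because $\boldsymbol{\vartheta}(\ell(s))=O(s)$, so neither it nor any $\ddot\ell(0)$-dependent terms contribute to $d^2\chi(\ell)$ once $d\chi(\ell)=0$ is established.
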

\begin{proof}
 By definition we get 
\begin{equation}\label{eq:dchi}
d\chi(\ell)= dF_{0}(\phi(\ell))\left( \partial_{\ell}\tpsi(\ell,\bt)|_{\bt=\bth(\ell)} 
+ \sum_{j=1}^m \partial_{t_j} \tpsi(\ell,\bt)|_{\bt=\bth(\ell)}
 d \vartheta_j(\ell)\right)- dF_0(\ell)\, .
\end{equation}
For $\ell\in\esse$, by \eqref{jac psi}, we obtain
$
d\chi(\ell)= dF_{0}(\ell)\left( id 
+ \sum_{i=1}^m \vec {F}_i(\ell) d \vartheta_i(\ell)\right)- dF_0(\ell) =0
$.
Therefore $D^2\chi(\ell)$ is a well defined quadratic form for every $\ell\in\esse$.
To prove \eqref{hessiano chi}, we perform the computations in any chart.
Fix $\ell\in\esse$ and $\delta\ell\in T_{\ell}T^*M$ and recall \eqref{jac psi}, \eqref{second psi} and \eqref{dtheta};
from \eqref{eq:dchi} it follows
\begin{align*}
 D^2\chi(\ell)[\delta\ell]^2 & =D^2F_0(\ell)[\delta\ell +\sum_{i=1}^m \vec {F}_i(\ell)
\langle d \vartheta_i(\ell),\delta \ell \rangle  ]^2 - D^2F_0(\ell)[\delta\ell]^2+\\
 &+ dF_{0}(\ell)\left( \partial^2_{\ell\,\ell} \tpsi(\ell,\boldsymbol{0})
+  \sum_{i=1}^m \vec {F}_i(\ell)D^2\vartheta_i(\ell)\right)[\delta\ell]^2 +\\
&+dF_{0}(\ell)\left(2 \sum_{i=1}^m \partial^2_{\ell\, t_i}\tpsi(\ell,\boldsymbol{0})\delta\ell 
\langle d \vartheta_i(\ell),\delta \ell \rangle
+ \sum_{i,j=1}^m \partial^2_{t_j\, t_i}\tpsi(\ell,\boldsymbol{0})
\langle d \vartheta_j(\ell),\delta \ell \rangle \langle d \vartheta_i(\ell),\delta \ell \rangle
\right)\\
&=2\sum_{i=1}^m\left( D^2F_0(\ell)(\delta\ell , \vec {F}_i(\ell))
 +\langle D\vec {F}_i(\ell),\delta\ell\rangle
\right)\langle d \vartheta_i(\ell),\delta \ell \rangle +\\
&+ \sum_{i,j=1}^m\left(D^2F_0(\ell)(\vec {F}_i(\ell),\vec {F}_j(\ell)) + 
\langle dF_{0}(\ell),\partial^2_{t_j\, t_i}\tpsi(\ell,\boldsymbol{0})\rangle\right)
\langle d \vartheta_j(\ell),\delta \ell \rangle \langle d \vartheta_i(\ell),\delta \ell \rangle\\
&=2\sum_{i=1}^m L_{\delta \ell}L_{\vec {F}_i}F_0(\ell)\langle d \vartheta_i(\ell),\delta \ell \rangle+
\sum_{i,j=1}^m \partial^2_{t_j\, t_i}(F_0\circ\tpsi)(\ell,\boldsymbol{0})
\langle d \vartheta_j(\ell),\delta \ell \rangle \langle d \vartheta_i(\ell),\delta \ell \rangle\\
&= -2\sum_{i=1}^m
\langle dF_{0i} (\ell) , \delta \ell\rangle \langle d \vartheta_i(\ell),\delta \ell \rangle
 + \sum_{i,j=1}^m F_{ji0} (\ell) \langle d \vartheta_j(\ell),\delta \ell \rangle \langle d \vartheta_i(\ell),\delta \ell \rangle\\
&= - \sum_{i,j=1}^m F_{ji0} (\ell) \langle d \vartheta_j(\ell),\delta \ell \rangle \langle d \vartheta_i(\ell),\delta \ell \rangle
= -  \sum_{r.s=1}^m  (\mathbb{L}_{\ell}^{-1})_{rs} \langle dF_{0r}(\ell),\delta \ell  \rangle
\langle dF_{0s}(\ell)	,\delta \ell  \rangle.
\end{align*}
\end{proof}
Thanks to its definition, $H_0$ satisfies the following properties. 
\begin{theorem} \label{th: properties of H0}
The Hamiltonian $H_0$ has the following properties.
\begin{enumerate}
\item $F_0 =  H_0$  and $\vec{F}_0=\vec{H}_0$ on $\esse$.
\item The vector field $\vec{H}_0$ is tangent to $\Sigma$.
\item $F_0\leq H_0 $ on $\Sigma$.
\end{enumerate}
\end{theorem}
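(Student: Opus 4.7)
The plan is to deduce all three claims from the defining equation $H_0=F_0\circ\phi$, the regularity statements in Lemma \ref{lemma: theta}, and the information on $\chi=H_0-F_0$ provided by Proposition \ref{prop chi}. Items (1) and (3) are relatively direct; the tangency in (2) is the technical heart of the theorem.

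I first address (1). For every $\ell\in\esse$ one has $F_{0i}(\ell)=0$, so $\tpsi(\ell,\bzero)=\ell$ already satisfies the equation $F_{0i}\circ\tpsi(\ell,\cdot)=0$ that implicitly defines $\bth$; the uniqueness part of the implicit function theorem used in Lemma \ref{lemma: theta} therefore forces $\bth(\ell)=\bzero$, so $\phi(\ell)=\ell$ and $H_0(\ell)=F_0(\ell)$. For the vector fields, $\vec{H}_0-\vec{F}_0=\vec{\chi}$, and Proposition \ref{prop chi} gives $d\chi(\ell)=0$ for every $\ell\in\esse$, hence $\vec{\chi}(\ell)=0$ and $\vec{H}_0(\ell)=\vec{F}_0(\ell)$ on $\esse$.

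For (3) I use the diffeomorphism $\tpsi:\esse\times[-\epsilon,\epsilon]^m\to\Sigma$ of Proposition \ref{prop: psi}. Writing any $\ell\in\Sigma$ uniquely as $\ell=\tpsi(\ell',\bt)$ with $\ell'\in\esse$, the time-$(-1)$ flow of $\bt\bff$ is the inverse of its time-$1$ flow, hence $\tpsi(\ell,-\bt)=\ell'\in\esse$; uniqueness of $\bth$ then yields $\bth(\ell)=-\bt$ and $\phi(\ell)=\ell'$, so $H_0(\ell)=F_0(\ell')$. Consequently $H_0(\ell)-F_0(\ell)=F_0(\ell')-F_0\circ\tpsi(\ell',\bt)$, and by Remark \ref{rem tpsi} the Taylor expansion of $\bt\mapsto F_0\circ\tpsi(\ell',\bt)$ at the origin has vanishing linear part, since $L_{\vec{F}_j}F_0(\ell')=-F_{0j}(\ell')=0$ on $\esse$, and Hessian equal to $\mathbb{L}_{\ell'}$, which by Assumption \ref{ass SGLC} satisfies $\mathbb{L}_{\ell'}[\bt]^2\le -c|\bt|^2$. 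Possibly shrinking $\epsilon$ so that the cubic remainder is absorbed by the quadratic term, one obtains $F_0\circ\tpsi(\ell',\bt)\le F_0(\ell')$ throughout $\Sigma$, so $F_0\le H_0$ on $\Sigma$.

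The main obstacle is (2). I plan to reformulate it as the identity $\{G,H_0\}|_\Sigma=0$ for every Hamiltonian $G$ of a vector field in $\Lie{\mathfrak{f}}$, which is equivalent to $\vec{H}_0\in T\Sigma$ along $\Sigma$ because these Hamiltonians generate the vanishing ideal of $\Sigma$. Induction on the depth of brackets, based on the Jacobi identity together with the tangency of every element of $\mathrm{Lie}(\vec{F}_1,\ldots,\vec{F}_m)$ to $\Sigma$ (property (P1)), reduces the claim to the base cases $\{F_i,H_0\}|_\Sigma=0$ for $i=1,\ldots,m$. To verify these I will compute $\{F_i,H_0\}(\ell)=dH_0(\ell)\vec{F}_i(\ell)=dF_0(\phi(\ell))\cdot d\phi(\ell)\vec{F}_i(\ell)$ and combine two structural constraints on $d\phi$: differentiating the implicit identity $F_{0k}\circ\phi\equiv 0$ along $\vec{F}_i$ forces $d\phi(\ell)\vec{F}_i(\ell)\in\ker dF_{0k}(\phi(\ell))$ for every $k=1,\ldots,m$, while the identity $H_0\circ\tpsi(\ell',\bt)=F_0(\ell')$ established in the proof of (3) shows that $dH_0(\ell)$ annihilates every fibre-tangent direction $\partial_{t_j}\tpsi(\ell',\bt)$. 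The main technical difficulty, absent in the involutive case of \cite{ChiSte}, is that in the non-involutive setting the fibre-tangent directions $\partial_{t_j}\tpsi(\ell',\bt)$ differ from $\vec{F}_j(\ell)$ by a series of iterated-bracket corrections $\vec{F}_{k_1\cdots k_p j}$; all such corrections remain tangent to $\Sigma$ by (P1), which is precisely the role of the constant-rank hypothesis on $\Lie{\mathfrak{f}}$ (Assumption \ref{ass lie}), and this is what permits the algebraic cancellation that yields the base case.
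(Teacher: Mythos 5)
Your proofs of (1) and (3) are correct and close to the paper's; for (1) the reasoning is essentially identical, and for (3) you use the fibre parametrization $\ell=\tpsi(\ell',\bt)$ and Taylor expand $\bt\mapsto F_0\circ\tpsi(\ell',\bt)$ at $\bzero$, whereas the paper invokes Proposition~\ref{prop chi} directly on the splitting $T_\ell\Sigma=T_\ell\esse\oplus\mathrm{span}\{\vec{F}_i(\ell)\}$; both are valid and roughly equal in length.

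Part (2), however, has a genuine gap. Your reduction (via Jacobi identity and (P1)) of the full tangency claim to the base case $\{F_i,H_0\}|_\Sigma=0$, $i=1,\dots,m$, is fine, but the two ``structural constraints'' you propose do not yield the base case, and the final sentence about ``algebraic cancellation'' is not an argument. The first constraint, $d\phi(\ell)\vec{F}_i(\ell)\in\ker dF_{0k}(\phi(\ell))$, is vacuous: since $F_{0k}\circ\phi\equiv 0$ on all of $\U$, the whole image of $d\phi(\ell)$ lies in $\bigcap_k\ker dF_{0k}(\phi(\ell))$, and this says nothing about $dF_0(\phi(\ell))$, which does not vanish on $T_{\phi(\ell)}\esse$. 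The second constraint gives $dH_0(\ell)\,\partial_{t_j}\tpsi(\ell',\bt)=0$, $j=1,\dots,m$, but the vectors $\partial_{t_j}\tpsi(\ell',\bt)$ span only an $m$-dimensional subspace of the $R$-dimensional space $\mathrm{Lie}_\ell(\vec{F}_1,\dots,\vec{F}_m)$; when $R>m$ (the non-involutive case you are addressing) knowing that $dH_0(\ell)$ kills this $m$-plane does not force it to kill $\vec{F}_i(\ell)$. Writing $\partial_{t_j}\tpsi(\ell',\bt)=\vec{F}_j(\ell)+(\text{bracket corrections})$ and noting the corrections are tangent to $\Sigma$ does not help: to conclude $dH_0(\ell)\vec{F}_j(\ell)=0$ you would need to already know that $dH_0(\ell)$ annihilates the corrections, which is precisely the deeper part of the claim you set out to prove, so the argument is circular.

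The paper's proof of (2) avoids this issue by working with the partial differential $A=\partial_\ell\tpsi(\ell,\bt)|_{\bt=\bth(\ell)}$ (the tangent map of the symplectomorphism $\exp(\bth(\ell)\bff)$ with $\bth(\ell)$ frozen), not $d\phi$. On $\Sigma$, after noting that $dF_0(\phi(\ell))$ annihilates $\mathrm{Lie}_{\phi(\ell)}(\vec{F}_1,\dots,\vec{F}_m)$ because $\phi(\ell)\in\esse$, equation \eqref{eq: dH0} reduces to $dH_0(\ell)=dF_0(\phi(\ell))\circ A$, hence $\vec{H}_0(\ell)=A^{-1}\vec{F}_0(\phi(\ell))$. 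Since $A$ is the derivative of a flow tangent to $\Sigma$, it maps $T_\ell\Sigma$ isomorphically onto $T_{\phi(\ell)}\Sigma$; combined with property (P4) ($\vec{F}_0(\phi(\ell))\in T_{\phi(\ell)}\Sigma$), this gives $\vec{H}_0(\ell)\in T_\ell\Sigma$ without any induction. If you wish to keep the Poisson-bracket formulation, a repairable version of your idea is: for $X\in\mathrm{Lie}_\ell(\vec{F}_1,\dots,\vec{F}_m)$ one has $\langle dH_0(\ell),X\rangle=\langle dF_0(\phi(\ell)),AX\rangle$, and $AX\in\mathrm{Lie}_{\phi(\ell)}(\vec{F}_1,\dots,\vec{F}_m)$ because the flow $\exp(\bth(\ell)\bff)$ preserves the Lie algebra distribution; this vanishes since $\phi(\ell)\in\esse$. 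The essential ingredients you are missing are the frozen-flow differential $A$ and property (P4).
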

\begin{proof}
Since $\phi|_{\esse}$ is the identity, then $H_0=F_0$ on $\esse$. Moreover
\begin{align}
d H_0 (\ell) &= d F_0 \circ\left( \partial_{\ell} \tpsi(\ell,\bt)|_{\bt=\bth(\ell)} + \sum_{i=1}^m \partial_{t_i} \tpsi(\ell,\bt)|_{\bt=\bth(\ell)}
d\vartheta_i(\ell)\right) \label{eq: dH0} \\
&= d F_0 \circ\left( \mathrm{id} + \sum_{i=1}^m \vec{F}_i(\ell)d\vartheta_i(\ell) \right) =dF_0(\ell), \nonumber
\end{align}
since $\bth(\ell)=\boldsymbol{ 0}$ and $F_{0i}(\ell)= 0$ on $\mathcal{S}$. This ends the proof of (1). 

To prove (2), fix $\ell\in\Sigma$ and observe that $\partial_{t_i}\tpsi(\ell,\bt)\in$ Lie$(\mathfrak{f})(\tpsi(\ell,\bt))$,
$i=1,\ldots,m$, so that
by Assumption \ref{ass F_0i=0} we have
\[
\langle dF_0(\phi(\ell)),\partial_{t_i} \tpsi(\ell,t)|_{t=\bth(\ell)}\rangle=0.
\]
Therefore \eqref{eq: dH0} leads to
\[
dH_0(\ell)=dF_0(\phi(\ell))  \circ \partial_{\ell} \tpsi(\ell,\bt)|_{\bt=\bth(\ell)}, \qquad \ell\in \Sigma,
\]
and then we get easily
\[
\vec{H}_0(\ell) = \left[\partial_{\ell} \tpsi(\ell,\bt)|_{\bt=\bth(\ell)}\right]^{-1} \vec{F}_0(\phi(\ell)).
\]
Since $\phi(\ell)\in\esse$, $\vec{F}_0(\phi(\ell)) \in T_{\phi(\ell)}\Sigma$ and
$\left[\partial_{\ell} \tpsi(\ell,\bt)|_{\bt=\bth(\ell)}\right]^{-1}$ maps $T_{\phi(\ell)}\Sigma$ onto $T_{\ell}\Sigma$, 
then (2) is proved.

Statement (3) is a straight consequence of Proposition \ref{prop chi}, since for all
$\ell\in\esse$, $\delta\ell\in T_{\ell}\esse $ and $i,j=1,\ldots,m$, one gets
$D^2\chi(\ell)[\delta\ell]^2= D^2\chi(\ell)[\delta\ell, \vec{F}_i]=0$ and 
$D^2\chi(\ell)[\vec{F}_j, \vec{F}_i]=-F_{ij0}$.
 \end{proof}
We finally define the super-maximized Hamiltonian $H_t$ as follows. 

\begin{definition} 
We denote with $H_t$ the following time-dependent Hamiltonian:
\begin{equation} \label{eq: H_t}
H_t(\ell)= H_0(\ell) + \sum_{i=1}^m \widehat{u}_i(t) F_i(\ell),
\end{equation}
and with $\mathcal{H}_t$ the Hamiltonian flow generated by $H_t$.
\end{definition}
Notice that $\vec{H}_t$ is tangent to $\Sigma$ and to the reference extremal $\wla(\cdot)$.

\section{The result} \label{sec results}

In this section we state and prove our main result. 
It relies on the following Hamiltonian sufficient conditions, that we state and prove here below.

\begin{theorem}[\bf Geometric sufficient condition] \label{cond suff lib-fix}
Let $(\wxi,\wuu,\wT)$ be the an admissible triple for the minimum-time problem \eqref{eq: min T}-\eqref{eq: contr sys} with associate adjoint vector $\wla$, and let 
Assumptions \ref{ass lie}--\ref{ass F_0i=0}  be satisfied. 
Suppose that there exist a neighborhood $V$ of  
$\wq_0$ and a smooth function
 $\mathfrak{a} : V \to \mathbb{R}$ with the following properties:
\begin{enumerate} [(i)]
\item $d\mathfrak{a}(\wq_0)=\wla(0)$ 
\item the Lagrangian submanifold $\Lambda =\{d\mathfrak{a} (q): q\in V\}$ is contained in $\Sigma$ 
and satisfies
\begin{equation} \label{good proj}
\ker \pi_* \mathcal{H}_{t*} \cap T_{\widehat{\lambda}(0)} \Lambda =\{0\} \qquad \forall\;t\in[0,\wT].
\end{equation}
\end{enumerate}

Then $(\wxi,\wuu,\wT)$ is a strict strong-local minimizer for the minimum-time problem \eqref{eq: contr sys} between $\mathcal{I}_{\wq_0}$ and 
$\wq_f$ (or between $\wq_0$ and $\mathcal{I}_{\wq_f}$)
and a 
strong-local minimizer for the minimum-time problem \eqref{eq: contr sys} between $\mathcal{I}_{\wq_0}$ and $\mathcal{I}_{\wq_f}$.  
\end{theorem}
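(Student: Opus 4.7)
The plan is to apply the field-of-extremals (Hamiltonian) method, using the super-Hamiltonian $H_t$ constructed in Section~\ref{sec geometry} as the substitute for the (ill-defined) maximized Hamiltonian $F_{max}$.

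First I construct the extremal field. By hypothesis~(ii) and the compactness of $[0,\wT]$, the map $(t,q)\mapsto \pi\mathcal H_t(d\mathfrak{a}(q))$ is a local diffeomorphism from a neighborhood of $[0,\wT]\times\{\wq_0\}$ onto a tubular neighborhood $\mathcal V$ of the graph of $\wxi$ in $\mathbb R\times M$. Each slice $L_t:=\mathcal H_t(\Lambda)$ is a horizontal Lagrangian submanifold and therefore is the graph of $d\mathfrak{a}_t$ for a smooth generating function $\mathfrak{a}_t$ with $\mathfrak{a}_0=\mathfrak{a}$, satisfying the Hamilton--Jacobi equation $\partial_t\mathfrak{a}_t(q)+H_t(d\mathfrak{a}_t(q))=0$. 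Moreover, since $\Lambda\subset\Sigma$ and $\vec H_t=\vec H_0+\sum_i\widehat u_i\vec F_i$ is tangent to $\Sigma$ by Theorem~\ref{th: properties of H0}(2) and property (P1), one has $L_t\subset\Sigma$ for every $t\in[0,\wT]$.

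The key inequality is obtained by lifting an arbitrary admissible triple $(\xi,\mathbf u,T)$ with graph in $\mathcal V$: set $\lambda(t):=d\mathfrak{a}_t(\xi(t))\in L_t\subset\Sigma$ and compute
\begin{align*}
\frac{d}{dt}\mathfrak{a}_t(\xi(t)) &= \partial_t\mathfrak{a}_t(\xi(t))+\langle\lambda(t),\dot\xi(t)\rangle\\
 &= -H_t(\lambda(t))+F_0(\lambda(t))+\sum_{i=1}^m u_i(t)F_i(\lambda(t))\\
 &= -H_0(\lambda(t))+F_0(\lambda(t)) = -\chi(\lambda(t)) \le 0 ,
\end{align*}
where the vanishing of the $F_i(\lambda(t))$ on $\Sigma$ and the inequality $\chi\ge 0$ on $\Sigma$ (Theorem~\ref{th: properties of H0}(3)) are used. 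Applied to the reference trajectory this identity is an equality, since $\wla(t)\in\esse$ where $\chi\equiv 0$; hence $\mathfrak{a}_{\wT}(\wq_f)=\mathfrak{a}_0(\wq_0)$.

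To convert the inequality into the time estimate I use that $L_t\subset\Sigma$ forces $d\mathfrak{a}_t$ to annihilate $\mathrm{Lie}(\mathfrak f)$, so $\mathfrak{a}_t$ is locally constant on the leaves of the foliation integrating $\mathrm{Lie}(\mathfrak f)$. For any competitor joining $\mathcal I_{\wq_0}$ to $\wq_f$ (or to $\mathcal I_{\wq_f}$), leaf-constancy and integration of the above inequality give
$$\mathfrak{a}_T(\wq_f)=\mathfrak{a}_T(\xi(T))\le \mathfrak{a}_0(\xi(0))=\mathfrak{a}_0(\wq_0)=\mathfrak{a}_{\wT}(\wq_f).$$
At the fixed point $\wq_f$, the H--J equation yields $\mathfrak{a}_T(\wq_f)-\mathfrak{a}_{\wT}(\wq_f)=-\int_{\wT}^T H_t(d\mathfrak{a}_t(\wq_f))\,dt$; since $d\mathfrak{a}_{\wT}(\wq_f)=\wla(\wT)$ and $H_{\wT}(\wla(\wT))=F_0(\wla(\wT))=1$ from the PMP normalization, continuity ensures $H_t(d\mathfrak{a}_t(\wq_f))>0$ near $\wT$. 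Therefore $T<\wT$ would force $\mathfrak{a}_T(\wq_f)>\mathfrak{a}_{\wT}(\wq_f)$, contradicting the displayed inequality, and so $T\ge\wT$. Strictness when one endpoint is fixed follows because $T=\wT$ forces $\chi(\lambda(t))\equiv 0$, so $\lambda$ is an integral curve of $\vec H_t$ lying in $\esse$; matching the terminal condition $\lambda(\wT)=d\mathfrak{a}_{\wT}(\wq_f)=\wla(\wT)$ and invoking uniqueness of the flow of $\vec F_\esse$ (see~\eqref{feedback}) gives $\lambda\equiv\wla$, hence $\xi\equiv\wxi$.

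The main care is concentrated in two places: first, propagating the pointwise nondegeneracy~\eqref{good proj} into a genuine tubular neighborhood of the whole graph, which is a standard but essential compactness step; and second, verifying that the lift $\lambda(t)$ remains in $\Sigma$ along the full interval, so that the singular algebraic identities $F_i\circ\lambda\equiv 0$ and $H_t\circ\lambda=H_0\circ\lambda$ may be used throughout. The latter is exactly the content of the tangency of $\vec H_t$ to $\Sigma$ in Theorem~\ref{th: properties of H0}(2), which in turn is the geometric counterpart of the High Order Goh Condition (Assumption~\ref{ass GGoh}).
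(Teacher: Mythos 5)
Your main inequality argument, establishing $T\ge\wT$, is correct and is the generating-function (Hamilton--Jacobi) dual of the paper's argument: the paper shows that the $1$-form $\omega=\mathcal H_t^*\varsigma-H_t\circ\mathcal H_t\,dt$ is exact on $[0,\wT]\times\Lambda$ and integrates it over a closed path built from $\gamma_1,\ldots,\gamma_4$, whereas you package the same exactness into the family $\mathfrak{a}_t$ solving $\partial_t\mathfrak{a}_t+H_t\circ d\mathfrak{a}_t=0$ and differentiate $\mathfrak{a}_t(\xi(t))$. The algebra is identical — the key identity $\chi\ge 0$ on $\Sigma$, the vanishing of $F_i$ on $\Sigma$, and leaf-constancy of $\mathfrak{a}_t$ all appear in both. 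Your formulation is perhaps slightly more transparent but it is not a genuinely different route.

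The strictness argument, however, has a genuine gap. You write that $T=\wT$ forces $\chi(\lambda(t))\equiv 0$, ``so $\lambda$ is an integral curve of $\vec H_t$ lying in $\esse$.'' That implication is false. The lift $\lambda(t)=d\mathfrak{a}_t(\xi(t))$ is driven by the \emph{competitor's} control $\mathbf{v}$, not by the reference control $\wuu$, and on $\esse$ one has $\vec H_t=\vec{\widehat F}_t$, so $\lambda$ is an integral curve of $\vec H_t$ precisely when $\mathbf{v}=\wuu$ — which is what one is trying to prove. What $\chi(\lambda(t))\equiv 0$ actually gives is only $\lambda(t)\in\esse$ for all $t$, hence $\dot\lambda(t)\in T_{\lambda(t)}\esse$, i.e.\ $\langle dF_{0j}(\lambda(t)),\dot\lambda(t)\rangle=0$. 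One still has to expand $\dot\lambda$ as
\[
\dot\lambda(t)=\overrightarrow{\widehat F}_t(\lambda(t))+\sum_{i=1}^m\bigl(\mathrm{v}_i(t)-\widehat u_i(t)\bigr)\,\mathcal H_{t*}\bigl(\pi\circ\mathcal H_t\bigr)_*^{-1}f_i(\xi(t)),
\]
use $\mathcal H_t(\Lambda)\subset\Sigma$ to show $\langle dF_{0j}(\lambda(t)),\mathcal H_{t*}(\pi\circ\mathcal H_t)_*^{-1}f_i(\xi(t))\rangle=F_{i0j}(\lambda(t))$, and conclude $F_{00j}(\lambda(t))+\sum_i\mathrm{v}_i(t)F_{i0j}(\lambda(t))=0$, i.e.\ $\mathbf{v}(t)=\bnu(\lambda(t))$ and $\dot\lambda=\vec F_\esse(\lambda)$. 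Only then does uniqueness of the flow of $\vec F_\esse$, together with $\lambda(\wT)=\wla(\wT)$, yield $\lambda\equiv\wla$. This computation is precisely the content of the second half of the paper's proof; without it the strictness claim is unsupported.
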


\begin{proof}
Consider the map 
\[
\mathrm{id} \times \pi\circ\mathcal{H} : (t,\ell) \in [0, \wT] \times \Lambda \mapsto (t, \pi\circ \mathcal{H}_t(\ell)) \in [0,\wT] \times M.
\]	
Hypothesis $(ii)$ and the compactness of the reference trajectory imply that (possibly restricting $V$) there exist 
a neighborhood $\mathcal{O}$ of the graph 
of $\wxi(\cdot)$ in $[0,\wT]\times \Lambda$ such that $\mathrm{id} \times \pi\circ\mathcal{H}$ is a diffeomorphism between 
$[0,\wT]\times \Lambda$ and $\mathcal{O}$.

Let $(\xi,\vv,T)$ be an admissible triple for the control system \eqref{eq: contr sys} such that its graph is contained 
in $\mathcal{O}$, 
$\xi(0)\in \mathcal{I}_{\wq_0}$, $\xi(T)\in \mathcal{I}_{\wq_f}$, and
$T\leq \wT$, and denote its lift on $\Lambda$ with $\ell(t)$, namely 
\[\ell(t)=(\pi\circ \mathcal{H}_t)^{-1} (\xi(t)).  
\]
Let us choose a curve $\mu_0 : [0,1]\to  \Lambda$ joining $\well_0$ with $\ell(0)$ satisfying  
$\pi \mu_0(t)\in \mathcal{I}_{\wq_0},\ \forall \ t\in[0,1]$,
and a curve $\mu_f : [T,\wT ] \to \Lambda$ joining $\well_0$ with $\ell(T)$ satisfying
$\pi \circ \mathcal{H}_t (\mu_f(t))\in \mathcal{I}_{\wq_f},\ \forall \ t\in[T,\wT]$.
Let us now define the following paths   in $[0,\wT] \times \Lambda$: 
\begin{align*}
\gamma_1 &= (t,\well_0)
&& t\in[0,\wT]\\
\gamma_2 &= (0, \mu_0(t)) && t\in[0,1]\\
\gamma_3 &= (t,\ell(t))   && t\in[0,T]\\
\gamma_4 &= \left(t,  \mu_f(t)\right) && t\in[T,\wT],
\end{align*}
and let $\gamma=(-\gamma_1) \cup \gamma_2 \cup \gamma_3 \cup \gamma_4$.

Consider the following 1-form on $[0,\wT] \times T^*M$
\begin{equation} \label{omega}
\omega(t,\ell)=\mathcal{H}_t^* \varsigma - H_t\circ \mathcal{H}_t (\ell) dt,
\end{equation}
where $\varsigma$ is the canonical Liouville form on $T^*M$. It is easy to prove (see \cite[Proposition 17.1]{AgSac}) that $\omega$ is exact on $[0,\wT] \times \Lambda$.
In particular, since $\gamma$ is a closed path contained in $[0,\wT] \times \Lambda$, then $\int_{\gamma} \omega=0$.
Let us now evaluate the single components of this integral.
It is easy to see that 
\[
\int_{\gamma_1} \omega=\int_{\gamma_2} \omega=0.
\]
Moreover
\begin{align*}
  \int_{\gamma_3} \omega  &= \int_0^T \langle \mathcal{H}_t (\ell(t)) , \dot{\xi}(t) \rangle - H_t \circ  \mathcal{H}_t (\ell(t))\; dt  \\
&=\sum_{i=1}^m \int_0^T   \langle \mathcal{H}_t(\ell(t)),(\mathrm{v}_i(t) - \widehat{u}_i(t)) f_i  (\xi(t)) \rangle \;dt-
\int_0^T \chi \circ  \mathcal{H}_t(\ell(t)) \;dt \leq 0.
\end{align*}
Therefore
 \[ 0\leq \int_{\gamma_4} \omega = \int_{T}^{\wT} \langle \mathcal{H}_t (\ell(t)), \pi_*  \circ \mathcal{H}_{t*} \dot{\mu}_f(t)\rangle \;dt  -
\int_{T}^{\wT} H_{t} \left( \mathcal{H}_t (\mu_f(t)) \right) \;dt.\]
The first term is zero, since $\pi_*  \circ \mathcal{H}_{t*} \dot{\mu}_f(t)$ is tangent to $\mathcal{I}_{\pi  \circ \mathcal{H}_{t} (\mu_f(t))}$, by
construction, and $ \mathcal{H}_t (\ell(t))$ is contained in $\Sigma$. 
Then 
\[
0\leq\int_{\wT}^{T} H_{t} \left(\mathcal{H}_t (\mu_f(t))  \right)\;dt =
 \int_{\wT}^{T} 1+\mathcal{O}(t)\;dt \leq  (T-\wT)+ o(T-\wT),
\] 
which is a contradiction, therefore $T\geq \wT$. 

If the final point $\wq_f$ is fixed, we claim that the reference extremal is strictly optimal. To prove the claim, we consider an admissible triple $(\xi,\vv,T)$
as above, such that $T=\wT$ and $\xi(T)=\wq_f$.

In particular, since in this case $\int_{\gamma_4}\omega=0$, we obtain that
\begin{align*}
0 &= \int_{\gamma_3} \omega =-\int_0^T \chi \circ  \mathcal{H}_t(\ell(t)) \;dt,
\end{align*}
which implies that $\lambda(t)=  \mathcal{H}_t(\ell(t)) \in \esse$ for every $t\in [0,T]$, then, in particular, that $\dot{\lambda}(t)\in T_{\lambda(t)}\esse$,
that is $\langle d F_{0j}(\lambda(t)),\dot{\lambda}(t)\rangle =0$ for $j=1,\ldots,m$.
By computations it is possible to show that 
\[
\dot{\lambda}(t) =\overrightarrow{\widehat{F}}_t(\lambda(t)) +\sum_{i=1}^m (\mathrm{v}_i(t)-\widehat{u}_i(t)) \, \mathcal{H}_{t*} (\pi \circ \mathcal{H}_{t})_*^{-1} f_i(\xi(t)).
\]

Thanks to \eqref{good proj}, possibly restricting $V$, we can find a family of smooth functions $\mathfrak{a}^t: V^t\to \mathbb{R}$, where $V^t$
is a neighborhood of $\wxi(t)$ and $\mathfrak{a}^0=\mathfrak{a}$,
such that $\mathcal{H}_t(\Lambda)=\{d\mathfrak{a}^t(q): q\in V^t\}$ for every $t\in[0,T]$. In particular, for every $t\in [0,T]$ we have that
\begin{gather*}
\lambda(t)=d\mathfrak{a}^t(\xi(t))\\
\mathcal{H}_{t*} (\pi \circ \mathcal{H}_{t})_*^{-1} f_i(\xi(t)) = d\mathfrak{a}^t_* f_i(\xi(t)), \quad i=1,\ldots,m.
\end{gather*}
Then 
\begin{align*}
\langle d F_{0j}(\lambda(t)),\mathcal{H}_{t*} (\pi \circ \mathcal{H}_{t})_*^{-1} f_i(\xi(t))\rangle = 
\sigma(d\mathfrak{a}^t_* f_i(\xi(t)) ,\vec{F}_{0j}(\lambda(t))) &= L_{f_{i}} L_{f_{0j}} \mathfrak{a}^t (\xi(t))\\ 
&= L_{f_{0j}} L_{f_{i}} \mathfrak{a}^t (\xi(t)) + L_{f_{i0j}} \mathfrak{a}^t (\xi(t)) \\
&=\langle \lambda(t),f_{i0j}(\xi(t))\rangle,
\end{align*}
since $ L_{f_{i}} \mathfrak{a}^t (\xi(t))$ is identically null, being $\mathcal{H}_t(\Lambda)=d\mathfrak{a}^t(V^t)$ contained in $\Sigma$.
This implies that, for every $j=1,\ldots,m$, it holds
\[
0= \langle d F_{0j}(\lambda(t)),\dot{\lambda}(t)\rangle = F_{00j}(\lambda(t)) + \sum_{i=1}^m \mathrm{v}_i(t) F_{i0j}(\lambda(t)),
\]
that is, in particular, that $\mathbf{v}$ is a solution of equation \eqref{feedback}, and therefore $\dot{\lambda}(t)=\vec{F}_{\esse}(\lambda(t))$. Since
 $\widehat{\lambda}(t)$ is solution of the same equation, and both $\lambda$ and $\wla$ pass through $\wq_f$, then they coincide.

The same argument shows that the reference triple is a strict strong-local minimizer for the minimum-time problem \eqref{eq: contr sys} between $\wq_0$ and $\mathcal{I}_{\wq_f}$.
\end{proof}

Now we state and prove the main result.

\begin{theorem} \label{th: result}
Let $(\wxi,\wu,\wT)$ be an admissible triple of the minimum-time problem \eqref{eq: min T}-\eqref{eq: contr sys} with associate adjoint 
vector $\wla$, and let $\wla$ be a normal singular extremal. If Assumptions \ref{ass lie}--\ref{ass coercivity} are satisfied, then $\wxi(\cdot)$ 
is a minimum-time trajectory between $\mathcal{I}_{\wq_0}$
and $\mathcal{I}_{\wq_f}$, and hence between $N_0$ and $N_f$.
Moreover, the reference trajectory is strictly optimal among all admissible trajectories between $\wq_0$ and $\mathcal{I}_{\wq_f}$ and 
among all admissible trajectories between $\mathcal{I}_{\wq_0}$ and $\wq_f$.
\end{theorem}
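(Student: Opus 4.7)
The plan is to derive Theorem~\ref{th: result} from the geometric sufficient condition stated in Theorem~\ref{cond suff lib-fix}, whose hypotheses ask for a smooth function $\mathfrak{a}$ defined on a neighborhood $V$ of $\wq_0$ with $d\mathfrak{a}(\wq_0)=\wla(0)$, whose graph $\Lambda=\{d\mathfrak{a}(q):q\in V\}$ lies in $\Sigma$ and satisfies the non-intersection condition \eqref{good proj} along the flow of the super-Hamiltonian $H_t$. The point is that Assumption~\ref{ass coercivity} is exactly what is needed to produce such a $\Lambda$.

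To construct $\mathfrak{a}$, I would work in the privileged chart $\Upsilon$ of \eqref{frame}. By HOGC, $\wla(0)=\sum_{i=R+1}^n \widehat{p}_i\, dx_i$ at $\wq_0$, and the linear function $\alpha(q)=\sum_{i=R+1}^n \widehat{p}_i\, x_i(q)$ satisfies $d\alpha(\wq_0)=\wla(0)$ together with the constraints $\alpha|_{N_0}\equiv 0$ and $\alpha|_{\mathcal{I}_{\wq_0}}\equiv 0$ required in Section~\ref{sec secvar}. Fix $\rho>0$ large enough for Hestenes's theorem to yield coercivity of $\J_{\rho}$ on $\widetilde{\mathcal{W}}$, and set
\[
\mathfrak{a}(q)\;=\;\alpha(q)\;-\;\tfrac{\rho}{2}\sum_{i=R+1}^n x_i(q)^2.
\]
The identity \eqref{eq:framesigma} immediately gives $\langle d\mathfrak{a}(q),f(q)\rangle=0$ for every $f\in\Lie{\mathfrak{f}}$, so $\Lambda\subset\Sigma$; and since $\alpha$ is linear in the chosen coordinates, $d^2\mathfrak{a}(\wq_0)=-2\rho\,\Omega$, which under the canonical identification $T_{\wla(0)}T^*M\cong T_{\wq_0}^*M\oplus T_{\wq_0}M$ turns $T_{\wla(0)}\Lambda$ into precisely the Lagrangian subspace $L^{\prime\prime}$ of \eqref{L primo primo}.

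It remains to verify the projection condition, and this is where the main obstacle lies. Coercivity of $\J_{\rho}$ on $\widetilde{\mathcal{W}}$ is equivalent, via \eqref{new inclusion}, to the fact that the linear-quadratic Hamiltonian flow $\mathcal{H}_t^{\prime\prime}$ associated with $H_t^{\prime\prime}$ of \eqref{H''} satisfies $\ker\pi\,\mathcal{H}_t^{\prime\prime}|_{L^{\prime\prime}}=\{0\}$ for every $t$. The crux of the proof is then to identify, along $\wla$, the linearization $\mathcal{H}_{t*}$ of the super-Hamiltonian flow with $\mathcal{H}_t^{\prime\prime}$ under the identification just described. This relies on Proposition~\ref{prop chi}: the correction $\chi=H_0-F_0$ vanishes to first order on $\esse$ and has Hessian controlled by $-\mathbb{L}^{-1}$, so that the quadratic part of $H_t$ along $\wla$ reproduces the Hamiltonian $H_t^{\prime\prime}$ of \eqref{H''}, following the scheme developed in \cite{ASZ98, ChiSte, PoggStebsb, SteRoma}. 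With this identification, \eqref{new inclusion} translates into \eqref{good proj}, and Theorem~\ref{cond suff lib-fix} delivers the minimum-time optimality between $\mathcal{I}_{\wq_0}$ and $\mathcal{I}_{\wq_f}$ as well as the strict versions with one endpoint fixed.
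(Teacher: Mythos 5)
Your overall strategy is the paper's: build a Lagrangian manifold $\Lambda=\{d\mathfrak{a}(q)\}$ from the privileged chart \eqref{frame}, show it lies in $\Sigma$, translate Assumption~\ref{ass coercivity} into the projection condition \eqref{good proj} by identifying $\mathcal{H}_{t*}$ with $\mathcal{H}_t^{\prime\prime}$, and then invoke Theorem~\ref{cond suff lib-fix}. However there is a sign error in your definition of $\mathfrak{a}$ that stems from using the wrong identification between $T_{\well_0}(T^*M)$ and $T_{\wq_0}^*M\oplus T_{\wq_0}M$.

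You set $\mathfrak{a}=\alpha-\tfrac{\rho}{2}\sum_{i>R}x_i^2$ and claim $T_{\wla(0)}\Lambda=L''$ under the ``canonical identification.'' In the naive coordinate splitting that is indeed true, since $d^2\mathfrak{a}(\wq_0)=-2\rho\Omega$ and $L''=\{(-2\rho\Omega(\delta x,\cdot),\delta x)\}$. But the isomorphism that actually conjugates $\mathcal{H}_t^{\prime\prime}$ to $\mathcal{G}_{t*}=\widehat{\mathcal{F}}_{t}^{-1}\circ\mathcal{H}_t$ is not the naive one: the computation in Proposition~\ref{prop chi} forces $H_t^{\prime\prime}=-G_t^{\prime\prime}\circ\iota$, so $\iota$ must be \emph{anti}-symplectic, and the map that works is $\iota(\omega,\delta x)=-\omega+d(-\beta)_*\delta x$ with $\beta=-\alpha$. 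The sign flip on $\omega$ is essential. Under this $\iota$ one has $\iota L''=\{(\delta x,\,+2\rho\Omega(\delta x,\cdot))\}$, which is the tangent space of the graph of $d\big(\alpha+\tfrac{\rho}{2}\sum_{i>R}x_i^2\big)$, \emph{not} of your $\mathfrak{a}$. So the function that satisfies hypothesis (ii) of Theorem~\ref{cond suff lib-fix} is $\alpha_\rho=\alpha+\tfrac{\rho}{2}\sum_{i>R}x_i^2$; with your minus sign, $\iota L''\neq T_{\wla(0)}\Lambda$ and \eqref{new inclusion} does not transfer to \eqref{good proj}. Note also that coercivity of $\J_\rho=J''+\rho\Omega$ on $\widetilde{\mathcal{W}}$ supplies exactly a \emph{penalization} at $\zeta(0)$, and the transversality in $L''$ carries the opposite sign from the penalty; passing through $\iota$ restores the plus sign in $\alpha_\rho$. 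Fix the sign, make explicit that you are using the paper's anti-symplectic $\iota$ (and the equivalence $\mathcal{H}_t^{\prime\prime}=\iota^{-1}\circ\mathcal{G}_{t*}\circ\iota$ proved from Proposition~\ref{prop chi}), and also note at the end that $\widehat{\mathcal{F}}_{t*}$ preserves vertical fibers, so that $\ker\pi_*\mathcal{G}_{t*}|_{L}=\{0\}$ does imply $\ker\pi_*\mathcal{H}_{t*}|_{L}=\{0\}$; then the argument is complete.
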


\begin{proof} 
The thesis comes straightaway once proved that the coercivity assumption (Assumption \ref{ass coercivity}) 
allows us to define a smooth function $\alpha_{\rho}$ that satisfies the hypotheses of Theorem \ref{cond suff lib-fix}. 
In particular, 
we define $\alpha_{\rho}$ in the adapted coordinates \eqref{frame} of Section \ref{sub 2var} as follows:
\[
\alpha_{\rho}(x)=\sum_{i=R+1}^n \widehat{p}_ix_i+\frac{\rho}{2}\sum_{i=R+1}^n x_i^2.
\]

It is easy to see that $\alpha_{\rho}$ satisfies property $(i)$ and that \eqref{eq:framesigma} implies
that $\Lambda$ is a Lagrangian 
submanifold contained in 
$\Sigma$. To prove \eqref{good proj}  we need to exploit
the links between the flow of the Hamiltonian $H_t^{\prime \prime}$ 
defined in equation \eqref{H''} and  $\mathcal{H}_{t*}$, as done in \cite{ChiSte,PoggStebsb,SteRoma}.

It is known that the pull-back flow $\mathcal{G}_t = \widehat{\mathcal{F}}_t^{-1}
\circ \mathcal{H}_t$ is the Hamiltonian flow relative to the Hamiltonian 
$G_t : T^*M \rightarrow  \mathbb{R}$ defined by 
\[
G_t =
\big(
H_t -  \widehat{F}_t \big) \circ \widehat{\mathcal{F}}_t=\chi \circ \widehat{\mathcal{F}}_t\]
(see \cite{marsden-ratiu}).
Since $D G_t(\well_0) = 0$, then $G_t^{\prime \prime}=\frac{1}{2}D^2G_t|_{\well_0}$ is a well defined quadratic form and its associated Hamiltonian flow is 
$\mathcal{G}_{t*} : T_{\well_0}
(T^*M) \rightarrow T_{\well_0}(T^*M)$.

Let $\beta= -\sum_{i=R+1}^n \widehat{p}_ix_i$; then
the linear map $\iota: T_{\wq_0}^*M \times T_{\wq_0}M \rightarrow T_{\well_0}(T^*M)$ as follows
\[
\iota(\omega,\delta x)=-\omega+d(- \beta)_*\delta x
\] 
establishes an anti-symplectic isomorphism between $T_{\wq_0}^*M \times T_{\wq_0}M$ and $T_{\well_0}(T^*M)$. In particular, 
it determines an equivalence between  the 
Hamiltonian functions $G_t^{\prime \prime}$ and $H^{\prime \prime}_t$, i.e.
the following identities hold:
\begin{align}
H_t^{\prime \prime} &= -G_t^{\prime \prime} \circ \iota  \label{equivalenza H G} \\ 
\overrightarrow{H_t^{\prime \prime}} &=\iota^{-1}\circ \overrightarrow{
G_t^{\prime \prime}} \circ \iota \nonumber \\
\mathcal{H}_t^{\prime \prime}&= \iota^{-1} \circ \mathcal{G}_{t*} \circ \iota.  \label{equivalenza2}
\end{align}
We need to prove only \eqref{equivalenza H G}, since the other two equations are a direct consequence
(see \cite{ChiSte} and references therein for details).

Consider $\ell \in \esse$, $\delta \ell \in T_{\ell}(T^*M)$, and set $\ell_t=\widehat{\mathcal{F}}_{t}(\ell)$. Then, thanks to \eqref{hessiano chi}, 
we have that 
\begin{align*}
D^2G_t(\ell)[\delta \ell]^2 &= D^2 \chi(\ell_t)\circ\widehat{\mathcal{F}}_{t*} \otimes \widehat{\mathcal{F}}_{t*}\\
&=-  \sum_{r.s=1}^m  (\mathbb{L}_{\ell_t}^{-1})_{rs} \langle dF_{0r}(\ell_t),\widehat{\mathcal{F}}_{t*}\delta \ell  \rangle
\langle dF_{0s}(\ell_t),\widehat{\mathcal{F}}_{t*}\delta \ell  \rangle\\
&=-(\langle dF_{01}(\ell_t),\widehat{\mathcal{F}}_{t*}  \delta \ell\rangle,\ldots,\langle dF_{0m}(\ell_t),
\widehat{\mathcal{F}}_{t*} \delta \ell\rangle) 
(\mathbb{L}^{-1}_{\ell_t})
\begin{pmatrix}
\langle dF_{01}(\ell_t),\widehat{\mathcal{F}}_{t*} 
\delta \ell\rangle\\ \vdots
\\ \langle dF_{0m}(\ell_t),\widehat{\mathcal{F}}_{t*} \delta \ell\rangle 
\end{pmatrix}\\
&= -2 H_t^{\prime \prime} \circ \iota^{-1}\delta \ell.
\end{align*}
By computations, it is easy to see that the space $L''$ defined in equation \eqref{L primo primo} satisfies the equality
$\iota L''=\{d \alpha_{\rho*}\delta x : \delta x\in T_{\wq_0}M\}=L$, therefore equations \eqref{new inclusion}
and \eqref{equivalenza2} imply
\[
\ker \pi_*  \mathcal{G}_{t*} |_{L}  = \{0\} \qquad \forall\ t\in[0,\wT].
\]
To end the proof it is sufficient to notice that $\widehat{\mathcal{F}}_{t*} $ is an isomorphism on
the vertical fibers, since it comes from a lifted Hamiltonian.
\end{proof}

\section{Examples} \label{sec example}

The classical Dubins and dodgem car problems concern the motion of a car on the plane $\mathbb{R}^2$ with constant speed and controlled (bounded) angular velocity
(see for instance \cite{AgSac,craven}). 
In particular, Dubins problem looks for minimum-time trajectories between fixed initial and final positions and orientations, while in the dodgem car problem the final 
orientation is free.

As shown in \cite{JurdjLie} (see also \cite{ChiStecdc}), this problem can be reformulated on the manifold $\mathbb{R}^2\times \mathrm{SO}(2)$, where SO(2) is the group 
of positively oriented rotations on $\mathbb{R}^2$. A great advantage of this formulation is that the extension to higher dimensions is straightforward: we denote 
with $(q,R)$ the elements of $\mathbb{R}^N\times$ SO($N$),  
where SO$(N)$ is the group of positively oriented rotations on $\mathbb{R}^N$, and we consider the control system 
\begin{equation} \label{extended dubins}
\begin{cases}
\dot{q}(t)= R(t)e_1\\
\dot{R}(t)= \sum_{j=1}^{N-1} u_j(t) R(t)A_j, \qquad |\mathbf{u}|\leq 1,  
\end{cases}
\end{equation}
where $e_1$ is the first element of the canonical basis of $\mathbb{R}^N$, and, for every $j=1,\ldots,N-1$, $A_j$ is the anti-symmetric matrix defined by
\[
(A_j)_{lm}=
\begin{cases}
-1 & \mathrm{if} \ l=1,m=j+1\\ 
1 & \mathrm{if} \ l=j+1,m=1\\
0 & \mathrm{otherwise}.  
\end{cases}
\] 
This system models the motion of a point in the $N$-dimensional space with constant speed equal to 1, 
where we control the orientation velocity. For both Dubins and dodgem car problems, 
the initial condition consists in fixing the initial point $q_0$ and the initial (unit length) velocity $\mathbf{v}_0$ of the trajectory on $\mathbb{R}^N$. Namely,
$N_0=\{q_0\} \times \mathcal{I}_{\mathbf{v}_0}$, where $\mathcal{I}_{\mathbf{v}}=\{S\in \mathrm{SO}(N): Se_1=\mathbf{v}\}$. 
Dubins problem looks for minimum-time trajectories joining $N_0$ with
$N_f=\{q_f\} \times \mathcal{I}_{\mathbf{v}_f}$, for some fixed $q_f\in \mathbb{R}^N$, $\mathbf{v}_f\in \mathbb{R}^N$ with $|\mathbf{v}_f|=1$, while dodgem car problem searches minimum-time trajectories from $N_0$ to 
$N_f=\{q_f\} \times$ SO$(N)$, for some fixed $q_f\in \mathbb{R}^N$.

The system \eqref{extended dubins} can be embedded in the matrix group GL$(N+1)$ (non singular $(N+1)$-dimensional matrices), via the map
\[
(q,R) \in \mathbb{R}^N \times \mathrm{SO}(N) \mapsto g=\begin{pmatrix}
                                            1 & 0\\ q & R 
                                            \end{pmatrix} \in \mathrm{GL}(N+1).
\]

This formulation is suitable also to consider the Dubins problem on other homogeneous spaces different from $\mathbb{R}^N$, that is the
$N$-dimensional sphere $S^N$ and the $N$-dimensional hyperbolic space $\mathbb{H}^N$,
defined as 
$\mathbb{H}^N=\{\boldsymbol{x}\in \mathbb{R}^{N+1}: -x_0^2+\sum_{i=1}^N x_i^2=1,\ x_0>0\}$.  
Briefly, a pair \emph{(point,orientation)} in $S^N\times$ SO$(N)$
can be represented in the group $G=$ SO$(N+1)$ in the following way: the first column of the matrix $g\in$ SO$(N+1)$ gives the coordinate representation
 (in $\mathbb{R}^{N+1}$)
of the point, the other $N$ columns determine an orthonormal frame in the tangent space to the sphere at the point. Analogously, 
a pair \emph{(point,orientation)} in $\mathbb{H}^N\times$ SO$(N)$
can be represented in the group $G=$ SO$(1,n)$, as above: the first column of the matrix $g$ gives the coordinate representation (in $\mathbb{R}^{N+1}$)
of the point, while the other $N$ columns determine an orthonormal frame in the tangent space.
More details on these representations can be found in Appendix \ref{Lie} and in \cite{Jurdjevic,JurdjLie}, where the authors study the geodetic problem for 
curves with bounded curvature.

We can then write Dubins problem on $M\in \{\mathbb{R}^n,S^N,\mathbb{H}^N\}$ in the following unified way: 
\begin{equation} \label{min T 2}
\min T 
\end{equation}
subject to
\begin{equation} \label{g dot eps}
\begin{cases}\dot{g}(t)=g(t)\begin{pmatrix}
           0 & -\varepsilon e_1^{\mathsf{T}} \\ e_1 & 0
         \end{pmatrix}
+ \sum_{i=1}^{N-1} u_i(t) g(t)\begin{pmatrix}
           0 & 0 \\ 0 & A_i
         \end{pmatrix}, \qquad |\mathbf{u}|\leq 1 \\ 
g\in G\\
g(0)\in N_0,\ g(T)\in N_f,
\end{cases} 
\end{equation} 
where $G$, $\varepsilon$ and the manifolds of the constraints depend on the manifold $M$ as shown below:

\begin{center}
\begin{tabular}{|c|c|c|c|c|}
\hline
$M$ & $\varepsilon$ & $G$ & $N_0$ & $N_f$\\
\hline
 $\mathbb{R}^N$ & $\varepsilon=0$ & $\mathbb{R}^N \rtimes$ SO$(N)$& $\left\{g  : g \mathfrak{e}_1=\begin{pmatrix}1 \\ \wq_0\end{pmatrix},\:
 g \mathfrak{e}_2=\begin{pmatrix}0 \\ \boldsymbol{v}_0\end{pmatrix}\right\}$
& $\left\{g : g \mathfrak{e}_1=\begin{pmatrix}1 \\ \wq_f\end{pmatrix} , \:
 g \mathfrak{e}_2=\begin{pmatrix}0 \\ \boldsymbol{v}_f\end{pmatrix}\right\}$\\
\hline
 $S^N$ & $\varepsilon=1$ & SO$(N+1)$& $\{g  : g \mathfrak{e}_1=\wq_0, \  g \mathfrak{e}_2=\boldsymbol{v}_0\}$
& $\{g  : g \mathfrak{e}_1=\wq_f, \ g \mathfrak{e}_2=\boldsymbol{v}_f\}$\\
\hline
 $\mathbb{H}^N$ & $\varepsilon=-1$ & SO$(1,N)$& $\{g  : g \mathfrak{e}_1=\wq_0, \: g \mathfrak{e}_2=\boldsymbol{v}_0\}$
& $\{g  : g \mathfrak{e}_1=\wq_f, \:  g \mathfrak{e}_2=\boldsymbol{v}_f\}$\\
\hline
\end{tabular}
\end{center}

\medskip

\noindent
where $\mathfrak{e}_i$ denotes the i-th element of the canonical basis of $\mathbb{R}^{N+1}$.

The control system \eqref{g dot eps} is a control-affine system of the form \eqref{eq: contr sys} with $m=N-1$ and $\dim M=N(N+1)/2$, 
and the corresponding left-invariant vector fields are defined as $f_i(g)=gA_i, \, i=0,\ldots,N-1$. We recall that
left invariant vector fields satisfy the following relation:
$g[A_i,A_j]=[f_i,f_j](g)$, where 
$[\cdot,\cdot]$ denotes also the usual matrix commutator.
Thanks to this equation, the commutation properties of the matrices $A_i$ extend also to their associated left-invariant vector fields.
In particular, the following properties are easily verified:

\begin{enumerate}[(i)]
 \item $\Lie{\{A_i : i=1,\ldots,N-1}$ is 2-step bracket-generating and isomorphic to $\mathfrak{so}(N)$ (Lie algebra of antisymmetric $N$-dimensional matrices); 
  $\mathfrak{so}(N)$ has dimension $R=N(N-1)/2$.
\item  
the matrices $\{[A_i,A_j]: i,j=1,\ldots,N-1\}$ generate the derived sub-algebra
$[\mathfrak{so}(N),\mathfrak{so}(N)]$,  which is
isomorphic to $\mathfrak{so}(N-1)$ and has dimension  $\frac{(N-1)(N-2)}{2}$
\item the matrices $\{A_0,[A_0,A_i],A_i,[A_i,A_j]: i<j=1,\ldots,N-1\}$ are linearly independent and form a basis for the Lie algebra of $G$.
\item for $i,j=1,\ldots,N-1$ the matrix commutators of the kind $[A_i,[A_j,A_0]]$ satisfy the following relations:
\begin{gather*}
[A_i,[A_i,A_0]]=-A_0\\
[A_i,[A_j,A_0]]=0 \quad \mathrm{if} \ i\neq j.
\end{gather*}
\item the matrices $\{A_0,A_{01},\ldots,A_{0m}\}$ mutually commute.
\item $A_0$ commutes with every element of $\{[A_i,A_j]: i,j=1,\ldots,N-1\}$.
\end{enumerate}
We notice that the submanifolds $N_0$ and $N_f$ are integral manifolds of the derived sub-algebra $\{[f_i,f_j]:i,j=1,\ldots,m\}$. 
Indeed, it is easy to verify that the Lie sub-algebra is contained
in the tangent spaces of $N_0$ and $N_f$; a dimensional computation proves the claim.

Let us now consider singular extremal for the problem \eqref{min T 2}-\eqref{g dot eps}. First of all, we
remark that this problem does not admit abnormal singular extremals, thanks to property (iii).
Moreover, thanks to property (v) and from equation \eqref{F00i+Lu}, we get that the reference control $\wu(\cdot)$ is identically zero, and that
the matrix $\mathbb{L}_{\wla(t)}=-F_0(\wla(t)) \mathbb{I}_n$.
In particular,  singular trajectories are the integral curves of the drift $f_0$ ; with each of these curves we associate the adjoint vector $p(t)$ that satisfies the differential equation $\dot{p}(t)=-p\partial_q f_0$,
with initial condition $p(0)\in \{f_i,f_{ij},f_{0i}: \ i,j=1,\ldots,n-1\}^{\bot}$ and $\langle p(0),f_0\rangle=1$ (thanks to (iii), these conditions uniquely define $p(0)$). It is easy to prove
that the pair $(p(t),q(t))$ 
is a normal singular extremal for both Dubins and dodgem car problems, and that it satisfies Assumptions 
\ref{ass lie}--\ref{ass F_0i=0}; in particular $\mathbb{L}_{\wla(t)}=- \mathbb{I}_n$.

\begin{remark}
%
We stress that even if in this problem we consider bounded controls, nevertheless we do not need to strengthen the natural optimality conditions. 
Indeed, High Order Goh condition reduces to Goh condition; moreover, when considering the second variation, the linear quadratic problem  
\eqref{eq: secvar}-\eqref{eq: zeta dot}-\eqref{eq: zeta bc1} coincides with \eqref{eq: secvar fin}-\eqref{eq: zeta extended},
 since the tangent space to $N_0$ is in direct sum with the linear span of the controlled vector fields and their sum coincides with the Lie algebra 
of the controlled vector fields.
\end{remark}

We now compute explicitly the second variation. First of all, we compute the space $\mathcal{W}$ of the admissible variations, that is 
 we shall solve the Cauchy problem for $\zeta(t)$, 
\eqref{eq: zeta dot}-\eqref{eq: zeta bc1}.

Since the reference controls are null, the reference flow reduces to $\wh{S}_t = \exp (tf_0)$.
The time derivatives of the pull-back vector fields give  
\begin{gather*}
\dot{g}_t^i(\wh{q}_0) = \exp \left(-tf_0 \right)_*  \Big[f_0 ,f_i\Big] \circ \exp (tf_0) (\wh{q}_0)=
\exp \left(-tf_0 \right)_*  (f_{0i} ) \circ \exp (tf_0) (\wh{q}_0)\\
\ddot{g}_t^i(\wh{q}_0) = \exp \left(-tf_0 \right)_*  \Big[f_0,f_{0i}\Big] \circ \exp (tf_0) (\wh{q}_0)=
\exp \left(-tf_0 \right)_*  (f_{00i}) \circ \exp (tf_0) (\wh{q}_0)=0,
\end{gather*}
then $\dot{g}_t^i(\wh{q}_0)=\dot{g}_0^i(\wh{q}_0)=f_{0i}(\wh{q}_0)$ for every $t\in[0,\wT]$ and every $i=1,\ldots,N-1$, and then
\[
g_t^i(\wh{q}_0) = f_i(\wh{q}_0)+t f_{0i}(\wh{q}_0).
\]
The solution of \eqref{eq: zeta dot}-\eqref{eq: zeta bc1} is then
\[
\zeta(t) = \sum_{i=1}^{N(N-1)/2}  \epsilon_i f_i(\wq_0) + \sum_{i=1}^{N-1} \int_0^t w_i(s) \; ds\  f_{0i}(\wh{q}_0).
\]
From the boundary condition $\zeta(\wh T)=0$ and from (v) we get that the admissible variations $(\boldsymbol{\epsilon},\boldsymbol{w}(\cdot)) \in \mathcal{W}$
satisfy the constraints
\begin{equation} \label{zeta bc Lie}
\begin{cases}
\int_0^{\wh T} w_i(t) \; dt =  0 & i=1,\ldots,N-1\\
\epsilon_j=0 & j=1,\ldots,\frac{N(N-1)}{2},
\end{cases}
\end{equation}
\noindent
then $\zeta(t) = \sum_{i=1}^{N-1} \left( \int_0^t w_i(s) \; ds \right)  f_{0i}(\wh{q}_0)$.
Then the quadratic form \eqref{eq: secvar fin} is given by
\[
J^{\prime \prime}[(\boldsymbol{0},\boldsymbol{w})]^2=\frac{1}{2}\sum_{i=1}^{N-1}  \int_0^{\widehat{T}} w_i(t)^2   \;dt+ 
\sum_{i,j=1}^{N-1} \int_0^{\widehat{T}} w_i(t) \left( \int_0^t w_j(s) \; ds  \right) L_{f_{0j}} L_{f_{0i}} \beta(\widehat{x}_0) \; dt. 
\]
Integrating by parts the second term and thanks to conditions \eqref{zeta bc Lie} we get
\[
J^{\prime \prime}[(\boldsymbol{0},\boldsymbol{w})]^2=\frac{1}{2}\|\boldsymbol{w}\|^2_{L^2} + \frac{1}{2}\sum_{i=1}^{N-1}
\left(\int_0^{\wT} w_i(t) \;dt\right)^2=\frac{1}{2}\|\boldsymbol{w}\|^2_{L^2}.
\]

Therefore the second variation is
coercive.

\begin{remark}
In this paper we considered the second variation associated with the sub-problem with fixed final point, and we proved
that its coercivity is a sufficient condition for the optimality also if the final condition is not fixed. In particular, this implies that sufficient
optimality conditions for Dubins' problem are also sufficient for the optimality of the extremal in dodgem car problem.

We would like to remark that, in the example considered in this section, the extended second variation associated with the 
original boundary conditions is not coercive. 
Indeed, the final constraint $g(\wT) \in N_f$ imposes the constraint $\zeta(\wT)\in \widehat{S}_{\wT*}^{-1}(T_{g(\wT)}N_f)$. In particular, since $N_f$ is an 
integral manifold of the derived sub-algebra $\{[f_i,f_j] : i,j=1,\ldots,N-1\}$, then $T_{g(\wT)}N_f=\{[f_i,f_j](g(\wT)) : i,j=1,\ldots,N-1\}$ and,
by $\mathrm{(vi)}$ and the fact that the reference flow is the flow associated with the drift, it turns out that  
$\widehat{S}_{\wT*}^{-1}(T_{g(\wT)}N_f)=\{[f_i,f_j](g(0)) : i,j=1,\ldots,N-1\}$.
It is easy to prove that any non-zero variation of the form  $(\boldsymbol{\epsilon},\boldsymbol{w}\equiv 0)$ with $\epsilon_i=0$ for $i\leq N-1$ is admissible for
the problem with final constraint $N_f$, but $J^{\prime \prime}[(\boldsymbol{\epsilon},\boldsymbol{0})]^2=0$.

Therefore, $\mathcal{W}$ is the maximal subspace of variations where we can 	
require coercivity of the extended second variation.
\end{remark}


\appendix

\section{Necessity of HOGC} \label{app variations}
This section is devoted to prove that HOGC is a necessary optimality condition. To be more precise we prove
 the following  result.
\begin{theorem}\label{th:var}
Let  $U=\mathbb{R}^m$ and let $(\wxi,\wu ,\wT)$ be an optimal triple for the problem \eqref{eq: min T}-\eqref{eq: contr sys}.
Then there exists an adjoint covector $\wla :[0,\wT]\to T^*M $, such that 
\[
\langle \wla(t), f (\wxi(t))\rangle =0 \qquad \forall \, f\in \Lie{\mathfrak{f}},\  t\in[0,\wT] .
\]
\end{theorem}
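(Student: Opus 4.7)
The plan is to derive HOGC by applying the standard Pontryagin Maximum Principle not to the original problem \eqref{eq: min T}--\eqref{eq: contr sys}, but to an \emph{enlarged} control system whose family of controlled vector fields spans the whole of $\Lie{\mathfrak{f}}$ rather than only $\mathfrak{f}$. Concretely, I fix a smooth local frame $g_1,\dots,g_R$ of $\Lie{\mathfrak{f}}$ in a neighborhood of $\wxi([0,\wT])$, with $g_i=f_i$ for $i=1,\dots,m$, and consider the enlarged system
\[
\dot\xi=f_0(\xi)+\sum_{i=1}^m u_i f_i(\xi)+\sum_{\alpha=1}^R v_\alpha g_\alpha(\xi),\qquad (\mathbf{u},\mathbf{v})\in\R^m\times\R^R,
\]
with boundary conditions $\xi(0)\in N_0$, $\xi(T)\in N_f$ and minimum-time criterion. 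Since $\mathbf{v}$ is unrestricted, the maximality clause of PMP applied to the enlarged problem will automatically force the adjoint to vanish on every $g_\alpha$, hence on the whole of $\Lie{\mathfrak{f}}$.

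The bridge between the two problems is the approximation result of \cite{BacSte} recalled in Section~\ref{sec secvar}: when $U=\R^m$, any displacement along an integral leaf of $\Lie{\mathfrak{f}}$ can be realized by an admissible trajectory of \eqref{eq: contr sys} in arbitrarily small time. Using it, I would argue that $(\wxi,\wu,\bzero,\wT)$ is itself time-optimal for the enlarged problem. Indeed, if some enlarged admissible trajectory $\tilde\xi$ joined $N_0$ to $N_f$ in time $T<\wT$, I would partition $[0,T]$ finely, replace on each sub-interval the cumulative $\mathbf{v}$-displacement (which lies in $\Lie{\mathfrak{f}}$ up to higher-order terms in the mesh size) by a short Bacciotti--Stefani excursion of the original system, and close the final geometric discrepancy to $N_f$ by one more such excursion. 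Iterating gives an admissible trajectory of the original system reaching $N_f$ in time arbitrarily close to $T$, and in particular strictly below $\wT$, contradicting the optimality of $\wxi$.

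Once the enlarged optimality is established, I apply PMP to it: there exist a nontrivial Lipschitzian curve $\wla:[0,\wT]\to T^*M$ and $p_0\in\{0,1\}$ satisfying the adjoint equation \eqref{PMP Ham} and the transversality conditions \eqref{eq: trans cond} (both identical to the original ones, since $\mathbf{v}\equiv\bzero$), together with pointwise maximality of the pre-Hamiltonian over $(\mathbf{u},\mathbf{v})\in\R^m\times\R^R$. Since the supremum of an affine function over $\R^{m+R}$ is finite only when its linear part vanishes, maximality yields
\[
F_i(\wla(t))=0,\qquad \langle\wla(t),g_\alpha(\wxi(t))\rangle=0,\qquad i=1,\dots,m,\ \alpha=1,\dots,R,\ t\in[0,\wT],
\]
and, since $g_1,\dots,g_R$ is pointwise a basis of $\Lie{\mathfrak{f}}$, this is exactly HOGC. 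The main technical obstacle is the approximation argument of the previous paragraph: I must control uniformly, in both the partition mesh and the amplitudes of the Bacciotti--Stefani excursions, the time overhead as well as the geometric error of each replacement, and simultaneously guarantee that the final excursion lands on $N_f$ and not merely nearby. I would handle this by quoting \cite[Lemma 4.1, Corollary 4.1, Remark 4.1]{BacSte} piecewise and exploiting the strict slack $\wT-T>0$ to absorb the accumulated overhead.
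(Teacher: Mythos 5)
Your plan has a genuine gap at the crucial step: the transfer of optimality from the original system to the enlarged one. Having produced an enlarged trajectory $\tilde\xi$ from $N_0$ to $N_f$ in time $T<\wT$, you replace the $\mathbf v$-displacements by short Bacciotti--Stefani excursions of the original system, and you want to conclude that the original system also reaches $N_f$ in time $<\wT$. But the result you quote from \cite{BacSte} (and as restated in Section~\ref{sec secvar}) is only an \emph{asymptotic} one: for $q_1,q_2\in\mathcal I_{q}$ there exist $t_k\to 0$ and controls $\mathbf u_k$ with $\xi_k(q_1,\mathbf u_k,t_k)\to q_2$, not $=q_2$. So each excursion, and a fortiori the concatenation, leaves an $o(1)$ geometric error which, contrary to the time overhead, cannot be absorbed by the slack $\wT-T>0$: the concatenated original trajectory lands \emph{near} $N_f$, not on it, and this does not contradict the optimality of $(\wxi,\wu,\wT)$. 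Fixing this requires an exact-reachability or open-mapping argument at the endpoint, which is precisely the nontrivial content. The paper circumvents it by never asking for exact reachability: it works in Bianchini's framework of good (g-)variations and regular tangent cones, where an approximate infinitesimal displacement $\epsilon\,c\,f(\wxi(\bar s))+o(\epsilon)$ is enough, and then derives the covector from the separation theorem on cones, rather than from PMP applied to an auxiliary system.

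The parts of your argument that do work are the last ones: if one already knew that $(\wxi,\wu,\mathbf 0,\wT)$ is a Pontryagin-optimal triple for the enlarged system, then the maximality of an affine pre-Hamiltonian over the unbounded set $\R^m\times\R^R$ indeed forces $F_i(\wla)\equiv 0$ and $\langle\wla,g_\alpha(\wxi)\rangle\equiv 0$, which is HOGC; the adjoint equation and transversality conditions are unchanged since $\mathbf v\equiv 0$ along the reference. Conceptually your route is a ``Lie-saturation'' of the controlled distribution, whereas the paper's Appendix~\ref{app variations} constructs, for each $f\in\Lie{\mathfrak f}$ and each Lebesgue point $\bar s$ of $\wu$, an explicit family of scaled needle perturbations $\bnu_{\bt,\epsilon}$ supported on $[\bar s,\bar s+(N\epsilon)^2]$, with amplitude $\epsilon^{-1}$, whose pull-back displacement is $\epsilon\,c f(\wxi(\bar s))+O(\epsilon^3)$ (Lemma~\ref{le perturbations}); this makes $\Lie{\mathfrak f}_{\wxi(t)}$ a regular tangent cone to the reachable set and yields the covector by \cite{Bianchini99,Bianchini95}. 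Both routes hinge on the same fast--slow scaling that suppresses the drift contribution, but the paper packages the residual error into the definition of a variation, while your proposal would need to eliminate it outright.
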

This theorem is already known when the reference control is smooth (see \cite{BianchSte88}); here we remove the 
smoothness hypothesis. The proof follows the outlines of the so called ``higher order maximum principles'' based on
the ``good'' needle-like control variations; in particular we use the results contained in \cite{Bianchini99,Bianchini,Bianchini95}, where most of the other conditions are also analyzed.

A necessary condition for the trajectory $\wxi$ to be time optimal is that $\wxi(t)$ belongs to boundary of the reachable set from $\wq_0 $
at each time $t\in [0,\wT]$ (see for instance \cite{Bianchini95}). Therefore, it is not difficult to see that Theorem \ref{th:var} follows,
if we prove that  $ \mathrm{Lie}_{\wxi(t)}({\mathfrak{f}})$
is a local regular tangent cone to the reachable set at $\wxi(t)$, for every 
$t\in[0,\wT] $ (see \cite{Bianchini99}, \cite[Proposition 3.3]{Bianchini95} and the references therein).
Namely, following \cite{Bianchini99}, it is sufficient to prove that, for every Lebesgue point 
$\bar s\in [0,\wT]$ of the reference control $\wu $, there exists $c>0$ such that $cf(\wxi(\bar s))$ is a g-variations of $(\wxi,\wu)$ at $\bar s$,
 for all $f\in \mathrm{Lie}({\mathfrak{f}})$.

We start by fixing some notations. We denote with $S(t,t_0,q_0,\mathbf{v})$ the solution at time $t$ of the control 
problem \eqref{eq: contr sys} associated
with the control function $\mathbf{v}$, with initial condition at $t_0$ equal to $q_0$. 
Moreover, we use the notation $\widehat{S}_{t,t_0}(q_0)=\widehat{S}(t,t_0,q_0)=
S(t,t_0,q_0,\wu)$, where $\wu$ is the reference control. 

Applying the results in \cite{Bianchini99} and, in particular, putting together Definition 2.1,  Definition 2.3 and Proposition 2.4
therein, it is easy to see that if Lemma \ref{le perturbations} below holds true, then
$cf(\wxi(\bar s))$
is a right g-variation 
of order $2$ of
$(\wxi,\wu)$ at $\bar s$. As a consequence, Lemma \ref{le perturbations} proves Theorem \ref{th:var}. 
\begin{lemma} \label{le perturbations}
Let $\bar s\in [0,\wT]$ be a Lebesgue point for $\wu $. Then there exist positive numbers $c$, $N$, $\bar{\epsilon}$
such that for every $f \in\mathrm{Lie}(\mathfrak{f})$ there exists a family of control maps 
$\{\bnu_{\epsilon} : \epsilon \in [0,\bar{\epsilon}]\} \subset
L^1_{loc}([0,\wT],\mathbb{R}^m)$ with the 
following properties:   
\begin{enumerate}
 \item $\bnu_{\epsilon}(t)=\wu(t)$ outside the interval $[\bar s,\bar s +(N\epsilon)^2]$.
\item There exists a neighborhood $V$ of $\,\wxi(\bar s)$ such that the map 
$$(q,\epsilon)\mapsto 
\widehat{S}_{\bar s,\bar s +(N\epsilon)^2}\circ S(\bar s +(N\epsilon)^2,\bar s,q,\bnu_{\epsilon})$$
is continuous on  $V\times[0,\bar \epsilon]$.
\item The map
 $\epsilon\mapsto 
D_q\,\widehat{S}_{\bar s,\bar s +(N\epsilon)^2}\circ S(\bar s +(N\epsilon)^2,\bar s,q,\bnu_{\epsilon})\big |_{q=\wxi(\bar s)}$
 is continuous.
\item $ \widehat{S}_{\bar s,\bar s +(N\epsilon)^2}\circ S(\bar s +(N\epsilon)^2,\bar s,\wxi(\bar s),\bnu_{\epsilon})
=\wxi(\bar s)+\epsilon cf(\wxi(\bar s)) + o(\epsilon)$.
\end{enumerate}
\end{lemma}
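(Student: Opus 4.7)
The strategy is to establish the lemma by induction on the bracket length of $f$, exploiting the fact that $U=\mathbb{R}^m$ to employ controls of unboundedly large magnitude on the short interval $[\bar s,\bar s+(N\epsilon)^2]$. Since every element of $\mathrm{Lie}(\mathfrak{f})$ is a finite linear combination of iterated brackets $[f_{i_1},[f_{i_2},\dots,f_{i_k}]\cdots]$, and since families of g-variations combine by concatenation into new g-variations (the cone of variations is invariant under non-negative linear combinations), it suffices to construct $\{\bnu_\epsilon\}$ for each single iterated bracket, with constants $c,N,\bar\epsilon$ chosen uniformly over a finite family of generating brackets that spans $\mathrm{Lie}_{\wxi(\bar s)}(\mathfrak{f})$.

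For the base case $k=1$, that is $f=f_i$, I would take $\bnu_\epsilon$ equal to $\wu$ outside the window and equal to $\wu+\frac{c}{(N\epsilon)^2}\mathbf{e}_i$ on the window. Expanding the endpoint map to first order in the control increment, and using the fact that $\bar s$ is a Lebesgue point of $\wu$ to control the contribution of the non-autonomous reference drift over a window of length $(N\epsilon)^2$, one obtains that $S(\bar s+(N\epsilon)^2,\bar s,\wxi(\bar s),\bnu_\epsilon)=\wxi(\bar s+(N\epsilon)^2)+\epsilon c f_i(\wxi(\bar s))+o(\epsilon)$, whence after the pull-back by $\widehat S_{\bar s,\bar s+(N\epsilon)^2}$ one reads off property (4) with displacement $\epsilon c f_i(\wxi(\bar s))+o(\epsilon)$.

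For $k\geq 2$ the idea is to subdivide the window into $2k$ equal sub-intervals and, on each of them, add to $\wu$ a spike of amplitude $\sigma/|I_j|$ in an appropriate coordinate direction, arranged so as to reproduce the classical commutator pattern $\exp(\sigma f_{i_1})\exp(\sigma f_{i_2})\cdots\exp(-\sigma f_{i_2})\exp(-\sigma f_{i_1})=\exp\bigl(\sigma^k[f_{i_1},[\dots,f_{i_k}]\dots]\bigr)+O(\sigma^{k+1})$. Setting $\sigma=(\epsilon c)^{1/k}$ produces a displacement of size $\epsilon c f(\wxi(\bar s))+O(\epsilon^{(k+1)/k})$ and demands spike amplitudes of order $\epsilon^{1/k-2}$; these are admissible only because $U=\mathbb{R}^m$. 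Properties (1)--(3) then follow at once from the construction of the piecewise-defined $\bnu_\epsilon$ together with the standard continuous and differentiable dependence of ODE solutions on initial data and on the control in the $L^1$-topology.

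The main obstacle will be the careful bookkeeping of error terms: since the spike amplitudes diverge as $\epsilon\to 0$, at each level of the induction one must show that the errors coming from (a) the reference drift $\wu(\cdot)$ on the window, (b) the small $O(|I_j|)$ contributions of the free $f_0$-dynamics, and (c) the truncation of the commutator expansion are genuinely $o(\epsilon)$ and not merely $O(\epsilon)$. The Lebesgue-point hypothesis is what makes (a) work and is precisely what replaces the $C^\infty$ smoothness of $\wu$ used in \cite{BianchSte88}; the remaining estimates fit into the g-variation framework of \cite{Bianchini99,Bianchini95}, and once they are in place the lemma follows directly, which in turn yields Theorem \ref{th:var} via Proposition 3.3 and the subsequent arguments of \cite{Bianchini95}.
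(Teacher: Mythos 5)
Your plan is correct in spirit, but it follows a genuinely different route from the paper. The paper does \emph{not} argue by induction on bracket length and never invokes the iterated-commutator expansion $\exp(\sigma f_{i_1})\exp(\sigma f_{i_2})\cdots=\exp(\sigma^k[f_{i_1},[\dots,f_{i_k}]\dots])+O(\sigma^{k+1})$. Instead it works with the auxiliary \emph{driftless} system $\dot\zeta=\sum_i u_if_i(\zeta)$ and appeals to the Krener--Chow theorem: for a suitable small $\bar\bt$, the map $\bt\mapsto\exp(t_Rf_{i_R})\circ\cdots\circ\exp(t_1f_{i_1})(\ba)$ is a local diffeomorphism onto a neighborhood of a point $\bb$ in the orbit $\mathcal{I}_\ba$. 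Concatenating with a fixed control bringing $\bb$ back to $\ba$, one obtains a family of piecewise-constant controls $\bnu_\bt$ whose driftless endpoint map covers a full neighborhood of $\ba$ in $\mathcal{I}_\ba$. A single time/amplitude rescaling $\bnu_{\bt,\epsilon}(s)=\epsilon^{-1}\bnu_\bt(s\epsilon^{-2})$ then compresses these controls onto the window $[\,\bar s,\bar s+O(\epsilon^2)]$ with $\|\bnu_{\bt,\epsilon}\|_{L^1}=O(\epsilon)$ \emph{uniformly}, and a Gronwall estimate comparing the true (drifted) dynamics with the driftless one gives an error $O(\epsilon^3)=o(\epsilon)$. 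All constants $c,N,\bar\epsilon$ come at once from the openness of the covered neighborhood; there is no need to restrict to a finite spanning family of brackets, no $k$-dependent amplitudes $\epsilon^{1/k-2}$, no issue of achieving both signs of each bracket generator, and no $k$-dependent error tracking.

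Your approach can in principle be made to work --- the commutator pattern with $\sigma=(\epsilon c)^{1/k}$ does give displacement $\epsilon c\,f(\ba)+O(\epsilon^{(k+1)/k})=\epsilon c\,f(\ba)+o(\epsilon)$, and the $L^1$ norm $O(\epsilon^{1/k})$ still yields $o(\epsilon)$ drift error over the $O(\epsilon^2)$ window --- but it requires the extra bookkeeping you flag: uniformity of the constants over a finite spanning set, closure of the variation cone under negation (you only mention non-negative combinations, while $\mathrm{Lie}(\mathfrak{f})$ is a vector space), and error orders that deteriorate as $k$ grows. The paper's appeal to Krener--Chow replaces all of this by a single, length-independent construction and a uniform Gronwall bound. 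One small misattribution: the Lebesgue-point hypothesis is not what controls the drift contribution over the short window --- that is governed simply by the $L^\infty$ bound on $\wu$ via the $O(s)$ estimate on $(\widehat S_{\bar s+s,\bar s}^{-1})_*f_i\circ\widehat S_{\bar s+s,\bar s}-f_i$; the Lebesgue-point assumption is rather what the g-variation framework of \cite{Bianchini99,Bianchini95} attaches the needle variations to.
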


\begin{proof}
Let us consider
the driftless control system 
\begin{equation} \label{driftless}
\dot{\zeta}=\sum_{i=1}^m u_i f_i\circ \zeta,
\end{equation}
denoting its solutions at time $t$, relative to the control $\mathbf{u}$, and with initial condition $\zeta(t_0)=\zeta_0$, as 
$\widetilde{S}(t,t_0,\zeta_0,\mathbf{u})$.

Set $\ba=\widehat{\xi}(\bar s)$. We perform the proof in an adapted coordinate frame centered
at $\ba$,  analogous to the frame \eqref{frame} described in Section \ref{sec secvar}. In this frame
$\ba=\bzero $, $T_{\ba}\mathcal{I}_{\ba}=\R^R$ and $\mathcal{I}_{\ba} $ is a neighborhood of $\bzero$
in $\R^R$, that we call $\mathcal{I}_{\ba} $ in what follows.
 
If $\bar \bt=(\bar t_1,\ldots,\bar t_R)$ is sufficiently small, there exist a choice of $R$ vector fields  
$\{f_{i_1},\ldots,f_{i_R}\}\in \{f_{1},\ldots,f_{m}\}$ such that the map
\begin{equation} \label{map exp}
\bt=( t_1,\ldots, t_{R})\in\mathbb{R}^R\mapsto \exp\left(t_R f_{i_R}\right) \circ \cdots 
\circ \exp\left(t_1 f_{i_1}\right) (\ba)\in \mathcal{I}_{\ba}
\end{equation}
has maximal rank at $\bt=\bar \bt$  (see \cite[Theorem 1]{krenerchow}). This implies that there exist a $\delta>0$ 
and a small neighborhood of 
$\bb= \exp\left(t_R f_{i_R}\right) \circ \cdots 
\circ \exp\left(t_1 f_{i_1}\right) (\ba)$ in 
 $\mathcal{I}_{\bb}$ such that the map \eqref{map exp} is invertible between the ball of radius $\delta$ centered 
 in $\bar \bt$, denoted as $B_{\delta}(\bar \bt)$, 
 and the neighborhood of $\bb$. 
 
For $\bt\in \mathbb{R}^R$, let us rewrite the map \eqref{map exp} as $\widetilde{S}(1,0,\ba,\mathbf{u}^{\bt})$, for the 
piecewise-constant control $\mathbf{u}^{\bt}\in L^{\infty}([0,1],\mathbb{R}^m)$, defined, for $j=1,\ldots,m $,
by 
\[ u^{\bt}_j\colon
s\in\Big[\frac{k-1}{R},\frac{k}{R}\Big]\mapsto
\begin{cases}
R t_k & \mbox{ if } j=i_k\\
0 & \mbox{ if } j\neq i_k
\end{cases} \qquad k=1,\ldots,R.
\]
It is clear that 
$\|\mathbf{u}^{\boldsymbol{t}}\|_{L^{\infty}}$ is uniformly bounded for $\bt \in B_{\delta}(\bar \bt)$.
Let $\mathbf{u_0} \in L^{\infty}([0,1],\mathbb{R}^m)$ be the control map which satisfies
\[
\widetilde{S}(1,0,q,\mathbf{u_0)})= \exp\left(-\bar t_1 f_{i_1}\right) \circ \cdots 
\circ \exp\left(-\bar t_R f_{i_R}\right) (q),
\]
so that $\widetilde{S}(1,0,\bb,\mathbf{u_0)})=\ba$
and  $\|\mathbf{u_0}\|_{L^{\infty}}=\|\mathbf{u}^{\bar{\boldsymbol{t}}}\|_{L^{\infty}}$.

For $\bt\in  B_{\delta} (\bar \bt)$,
let us consider the control map $\bnu_{\bt} \in L^{\infty}([0,2],\mathbb{R}^m)$ defined as
\[
\bnu_{\bt} \colon s\mapsto\begin{cases}
                            \mathbf{u}^{\boldsymbol{t}}(s) & \forall\; s\in[0,1] \\ 
                            \mathbf{u_0}(s-1) & \forall\; s\in (1,2] .
                            \end{cases}
\]
By definition there exists an $M>0$ such that $\|\bnu_{\bt}\|_{L^1([0,2])}\leq M$ for every $\bt\in B_{\delta}(\bar{\bt}) $; moreover,
possibly restricting $\delta$, 
the map $\bt\in B_{\delta}(\bar{\bt})\mapsto \widetilde{S}(2,0,\ba,\bnu_{\bt})$ 
is well defined and covers a compact neighborhood $U$ of $\ba$ in $\mathcal{I}_{\ba}$ contained in the local coordinate chart.

For $\epsilon>0$, we define the control variation $\bnu_{\bt,\epsilon} \in L^1([0,2],\mathbb{R}^m)$ as
\begin{equation}\label{variation L1}
\bnu_{\bt,\epsilon}(s)=
\begin{cases}
\epsilon^{-1} \bnu_{\bt}(s\epsilon^{-2}) & \forall \; s\in[0,2\epsilon^{2}]\\
0 & \forall \; s\in (2 \epsilon^{2},2], 
\end{cases} 
\end{equation}
and the control function
\begin{equation}\label{variation ctr}
\wbnu_{\bt,\epsilon}(s)=
\begin{cases}
\wu(\bar s+s) + \bnu_{\bt,\epsilon}(s) & \forall \; s\in[0,2\epsilon^{2}]\\
\wu(\bar s+s) & \forall \; s\in  [-\bar s,0) \cup (2 \epsilon^2,\wT -\bar s]. 
\end{cases} 
\end{equation}
It is easy to see that $\|\bnu_{\bt,\epsilon}\|_{L^1}=\epsilon\|\bnu_{\bt}\|_{L^1}\leq M\epsilon$ and that 
\begin{equation}\label{eq:senzadrift}
\widetilde{S}(2\epsilon^{2},0,\ba,\bnu_{\bt,\epsilon})=\epsilon \widetilde{S}(2,0,\ba,\bnu_{\bt}) .
\end{equation}
It is clear that $\wbnu_{\bt,\epsilon}$ satisfies property \emph{(1)} 
of Lemma \ref{le perturbations}. To prove the other properties, consider 
the pull-back system $\eta(s)=\widehat{S}_{\bar s+s,\bar s }^{-1}\circ S(\bar s+s, \bar s, q,\wbnu_{\bt,\epsilon})$, 
which is solution of the following Cauchy problem
\begin{equation}\label{eq:pullb}
 \begin{cases}
 \dot{\eta}(s)=\sum_{i=1}^m \nu_{\bt,\epsilon}^i(s) \Big(\widehat{S}_{\bar s+ s,\bar s}^{-1}\Big)_*f_i\circ \widehat{S}_{\bar s+ s,\bar s}(\eta(s))\\
\eta(0)=q,
\end{cases}
\end{equation}
where $\nu^i_{\bt,\epsilon}$ denotes the $i$-component of $\bnu_{\bt,\epsilon} $.
Since $\|\bnu_{\bt,\epsilon}\|_{L^1}\leq M\epsilon$, possibly restricting $\delta$, there exist 
$\bar\epsilon \in (0,1) $
and a neighborhood $V$ of $\ba$ such that $\eta (2\epsilon^{2})$ belongs to $U$, for all $q\in V$,
$\bt\in  B_{\delta}(\bar{\bt})$ and 
$\epsilon\in [0,\bar\epsilon]$.

Fix $\bt\in B_{\delta}(\bar{\bt})$. 
It is not difficult to verify that 
the map $\epsilon\in [0,\bar\epsilon ]\mapsto \bnu_{\bt,\epsilon}$ is strongly continuous in $L^1([0,2],\mathbb{R}^m)$; therefore,
\emph{(2)} and \emph{(3)} of Lemma \ref{le perturbations} are consequences of the properties of
system \eqref{eq:pullb}, see \cite{McShane83}.

Finally, to verify property \emph{(4)}, we consider the system \eqref{eq:pullb} with initial condition
$\eta(0)=\ba $ and the system  \eqref{driftless} with the same initial condition and control map
$\mathbf{u}= \bnu_{\bt,\epsilon} $.
We get
\begin{align*}
\frac{d}{ds}\left| \eta(s)-\zeta(s) \right| &\leq \sum_{i=1}^m  |\nu^i_{\bt,\epsilon}(s)| \Big( 
|(\widehat{S}_{\bar s+ s,\bar s}^{-1})_*f_i\circ \widehat{S}(\bar s+s,\bar s,\eta(s))-f_i(\zeta(s))|\Big)\\
&\leq \sum_{i=1}^m  |\nu^i_{\bt,\epsilon}(s)| \Big( |f_i(\eta(s))-f_i(\zeta(s))|+
|(\widehat{S}_{\bar s+ s,\bar s}^{-1})_*f_i\circ \widehat{S}(\bar s+s,\bar s,\eta(s))-f_i(\eta(s))|\Big)
\end{align*}
Possibly restricting $\bar\epsilon $ and $\bar\delta $, $\eta (s) $ and $\zeta (s) $ belong to the compact neighborhood
$U$, therefore there exists a constant $C>0$ such that 
$$ \frac{d}{ds}\left| \eta(s)-\zeta(s) \right|
\leq C  \sum_{i=1}^m  |\nu^i_{\bt,\epsilon}(s)|  |\eta(s)-\zeta(s)| +
   C\epsilon^{-1} s. 
$$
By Gronwall inequality we obtain 
\[
|\eta(2 \epsilon^{2})-\zeta(2\epsilon^{2}))| \leq e^{CM\epsilon}
\int_{0}^{2\epsilon^{2}} C \epsilon^{-1}s\;ds = 4 C e^{CM\epsilon} \epsilon^{3}.
\]
so that
\[
\eta(2 \epsilon^{2})=\zeta(2\epsilon^{2})+o( \epsilon).
\]
Since $\zeta(2\epsilon^{2})=\epsilon \widetilde{S}(2,0,\ba,\bnu_{\bt})$ and 
$\widetilde{S}(2,0,\ba,\bnu_{\bt})$ covers a neighborhood of $\ba$ in $\R^R$, Lemma \ref{le perturbations}
is proved.
\end{proof}

\section{Orthonormal frame bundles on canonical space forms} \label{Lie}

In this section we give more details about the lifting of Dubins' and dodgem car problem on Lie groups. 
For details, we refer to \cite{Jurdjevic,JurdjLie} and references 
therein.

Let $M \in\{\mathbb{R}^n,S^n,\mathbb{H}^n\}$. We recall that the hyperbolic space $\mathbb{H}^n$ is defined as 
$\mathbb{H}^n=\{ \boldsymbol{x}\in \mathbb{R}^{n+1} : -x_0^2 +\sum_{i=1}^n x_i^2=1,\ x_0>0\}$. 
The manifolds $\mathbb{R}^n$ and $S^n$ inherit a natural Riemannian structure from $\mathbb{R}^n$ and $\mathbb{R}^{n+1}$, respectively. As for 
$\mathbb{H}^n$, its Riemannian metric is given by the Lorentzian quadratic form $\langle \boldsymbol{x},\boldsymbol{y}\rangle=-x_0 y_0+\sum_{i=2}^n x_iy_i$.

The Dubins' problem on $M$ can be lifted to a minimum-time problem on the bundle of positive-oriented orthonormal frames on $M$, denoted with $\mathcal{F}_+(M)$, 
as we show below.

For $M=\mathbb{R}^n$, let us fix some positively oriented orthonormal frame $\{e_1,\ldots,e_n\}$ attached at the point $\boldsymbol{q}=0$ in $\mathbb{R}^n$. 
Given a point $\tilde{\boldsymbol{q}}\in \mathbb{R}^n$ and a positively oriented orthonormal frame $\{\boldsymbol{v}_1,\ldots,\boldsymbol{v}_n\}$ attached at 
$\tilde{\boldsymbol{q}}$, we can associate to them a pair
$(\boldsymbol{x},R) \in \mathbb{R}^n \times$ SO$(\mathbb{R}^n)$, where $\boldsymbol{x}$ denotes the coordinate representation of $\tilde{\boldsymbol{q}}$ with respect
to the basis $\{e_1,\ldots,e_n\}$, and $\boldsymbol{v}_i=Re_i$ for every $i=1,\ldots,n$. 
In other words, the bundle of positively oriented orthonormal frames can be identified 
with the orbit through $(0,\{e_1,\ldots,e_n\})$ of  
the semi-direct product $G=\mathbb{R}^n\rtimes$ SO$(\mathbb{R}^n)$, that is the group of pairs $(\boldsymbol{x},R) \in \mathbb{R}^n\times$ SO$(\mathbb{R}^n)$ 
equipped with the 
operation $(\boldsymbol{x},R)\cdot(\boldsymbol{y},S)=(\boldsymbol{x}+R\boldsymbol{y},RS)$. 
This construction provides a coordinate 
system on $\mathcal{F}_+(\mathbb{R}^n)$. Moreover, every element $(\boldsymbol{x},R)\in G$ can be represented by the following matrix $g\in $ GL$_{n+1}(\mathbb{R})$ 
\[
g=\begin{pmatrix}
   1 & 0 & \ldots & 0\\
   x_1 & & & \\
 \vdots & & R & \\
 x_n & & & 
  \end{pmatrix}.
\]

As the manifolds  $S^n$ and $\mathbb{H}^n$ are embedded in $\mathbb{R}^{n+1}$, we can repeat the same construction and find some group $G$ such that all the elements of
$\mathcal{F}_+(M)$ are given by the orbit of $G$ through some fixed orthonormal frame $\{\mathfrak{e}_1,\ldots,\mathfrak{e}_{n+1}\}$
of $\mathbb{R}^{n+1}$ centered at some fixed point $\boldsymbol{x}_0$.

Indeed, 
every point $\boldsymbol{q}\in S^n$ can be represented with respect to the canonical basis $\{\mathfrak{e}_1,\ldots,\mathfrak{e}_{n+1}\}$
by a unit vector 
$\boldsymbol{x} \in \mathbb{R}^{n+1}$. The tangent space 
to $S^n$ at $\boldsymbol{q}$ is given by the span of $n$ unit vectors $(\boldsymbol{v}_1,\ldots,\boldsymbol{v}_n)\in \mathbb{R}^{n+1}$ orthogonal to
$\boldsymbol{x}$. A choice of these unit vectors determines an orthonormal frame on the tangent space.
Therefore, 
the bundle $\mathcal{F}_+(S^n)$ can be regarded as the orbit of SO$(n+1)$ applied to the 
standard orthonormal frame $\{\mathfrak{e}_1,\ldots,\mathfrak{e}_{n+1}\}$ of $\mathbb{R}^{n+1}$, in the following way: to a frame $\{\boldsymbol{v}_1,\ldots,\boldsymbol{v}_n\}$ attached at a point 
$\boldsymbol{q}\in S^n$ there 
corresponds the 
matrix $g\in$ SO$(n+1)$ such that the coordinates of $\boldsymbol{q}$ are given by $\boldsymbol{x}=g\mathfrak{e}_1$ and 
$\boldsymbol{v}_i=g\mathfrak{e}_{i+1},\ i=1,\ldots,n$, that is
\[
g=\begin{pmatrix}
   x^1 & v_1^1 & \ldots & v_n^1\\
 \vdots &\vdots &  & \\
 x^{n+1} & v_1^{n+1}& \ldots &v_n^{n+1}  
  \end{pmatrix}
\]
(here $x^j$ and $v_i^j$ denote respectively the j-th component of the vectors $\boldsymbol{x}$ and $\boldsymbol{v}_i$).

For what concerns the hyperboloid $\mathbb{H}^n$, we consider the \emph{Lorentz group} SO(1,$n$), defined as the group of transformation 
that preserve the $(n+1)$-dimensional
matrix  
\[
\mathbb{I}(1,n)=\begin{pmatrix}
-1&  0 & \ldots & 0\\
0 & &\mathbb{I}_n &  
\end{pmatrix},
\]
where $\mathbb{I}_n$ is the $n$-dimensional identity matrix.
It can be proved as above that the bundle $\mathcal{F}_+(\mathbb{H}^n)$ can be identified with
the connected component SO$_0$(1,$n$) of SO(1,$n$) that contains the group identity.

For any $M\in \{\mathbb{R}^n,S^n,\mathbb{H}^n\}$, the tangent vector fields in $\mathcal{F}_+(M)$ are identified with the left-invariant vector fields on the isometry 
group of $M$, which is respectively $\mathbb{R}^n\rtimes$ SO$(\mathbb{R}^n)$ for $\mathbb{R}^n$, SO$(n+1)/$SO$(n)$ for $S^n$ and SO(1,$n$) for $\mathbb{H}^n$.
Therefore, we can prove that the tangent vectors are of the form
\begin{equation} \label{g dot eps 2}
g 
\begin{pmatrix}
0 &-\epsilon a_1 & \ldots & -\epsilon a_n\\
a_1& & & \\
 \vdots && U&\\
a_n && & 
\end{pmatrix},
\end{equation}
where $\epsilon=0$ for $M=\mathbb{R}^n$, $\epsilon=1$ for $M=S^n$ and $\epsilon=-1$ for $M=\mathbb{H}^n$, $U$ is an anti-symmetric matrix of dimension $n$,
and $g$ is the element of $\mathcal{F}_+(M)$ where the vector is attached.

Let $\gamma$ be a curve in $M$. When lifting $\gamma$ to a curve of orthonormal frames, that is to a curve $g(t)\in G$ such that its projection on $M$ coincides with $\gamma$,
it is possible to choose the lifted curve in such a way that the first 
element $\boldsymbol{v}_1(t)$ of the frame attached at $\gamma(t)$ is equal to $\dot{\gamma}(t)$. In particular, this sets $a_1=1$ and $a_j=0$ for $j\geq 2$ in equation 
\eqref{g dot eps 2}.
This kind of lifting is called \emph{Darboux frame}.

There is still a freedom of choice of the form of the matrix $U$ in equations \eqref{g dot eps 2}. Systems with the form \eqref{g dot eps} are called 
\emph{Serret-Frenet curves} (see \cite{Jurdjevic,JurdjLie} and references therein).

\bibliography{2ndvarbibliography}{}
\bibliographystyle{plain}

\end{document}